\numberwithin{equation}{section}
\theoremstyle{plain}
\newtheorem{thm}{Theorem}[section]
\newtheorem{rem}{Remark}[section]
\newtheorem{lem}{Lemma}[section]
\newcommand{\dE}{\mathbb{E}}
\newcommand{\dR}{\mathbb{R}}
\newcommand{\cA}{\mathcal{A}}
\newcommand{\cN}{\mathcal{N}}
\newcommand{\cF}{\mathcal{F}}
\newcommand{\cH}{\mathcal{H}}
\newcommand{\cR}{\mathcal{R}}
\newcommand{\veps}{\varepsilon}
\newcommand{\wh}{\widehat}
\newcommand{\wt}{\widetilde}
\newcommand{\rI}{\mathrm{I}}
\newcommand{\ind}{\mbox{1}\kern-.25em \mbox{I}}
\font\calcal=cmsy10 scaled\magstep1
\def\build#1_#2^#3{\mathrel{\mathop{\kern 0pt#1}\limits_{#2}^{#3}}}
\def\liml{\build{\longrightarrow}_{}^{{\mbox{\calcal L}}}}
\def\limp{\build{\longrightarrow}_{}^{{\mbox{\calcal P}}}}
\def\videbox{\mathbin{\vbox{\hrule\hbox{\vrule height1ex \kern.5em
\vrule height1ex}\hrule}}}
\email{Bernard.Bercu@math.u-bordeaux1.fr}
\email{Frederic.Proia@inria.fr}
\keywords{Durbin-Watson statistic, Autoregressive process, Residual autocorrelation, Statistical test for serial correlation}
\begin{document}

\title[On the asymptotic behavior of the Durbin-Watson statistic]
{A sharp analysis on the asymptotic behavior of the Durbin-Watson statistic
for the first-order autoregressive process
\vspace{2ex}}
\author{Bernard Bercu}
\address{Universit\'e Bordeaux 1, Institut de Math\'ematiques de Bordeaux,
UMR 5251, and INRIA Bordeaux, team ALEA, 351 Cours de la Lib\'eration, 33405 Talence cedex, France.}
\author{Fr\'ed\'eric Proia}
\address{
}
\thanks{}

\begin{abstract}
The purpose of this paper is to provide a sharp analysis on the asymptotic behavior of the Durbin-Watson statistic. 
We focus our attention on the first-order autoregressive process where the driven noise is also given by
a first-order autoregressive process. We establish the almost sure convergence and the asymptotic normality 
for both the least squares estimator of the unknown parameter of the autoregressive process as well as 
for the serial correlation estimator associated to the driven noise. In addition, the almost sure rates 
of convergence of our estimates are also provided. It allows us to establish the almost sure convergence 
and the asymptotic normality for the Durbin-Watson statistic. 
Finally, we propose a new bilateral statistical test for residual autocorrelation.
\end{abstract}

\maketitle


\section{INTRODUCTION}


The Durbin-Watson statistic is very well-known in Econometry and Statistics. 
It was introduced by the pioneer works of Durbin and Watson \cite{DurbinWatson50}, \cite{DurbinWatson51}, \cite{DurbinWatson71}, in order to test the serial independence of the driven noise of a linear regression model. The statistical test based on the Durbin-Watson statistic works pretty well for linear regression models, and its power was investigated by Tillman \cite{Tillman75}. However, as it was observed by Malinvaud \cite{Malinvaud61} and Nerlove and Wallis \cite{NerloveWallis66}, its widespread use in inappropriate situations may lead to inadequate conclusions. More precisely, for linear regression models containing lagged dependent random variables, the Durbin-Watson statistic may be asymptotically biased. In order to prevent this misuse, Durbin \cite{Durbin70} proposed alternative tests based on the redesign of the original one. Then, he explained how to use them in the particular case of the first-order autoregressive process previously investigated in \cite{Malinvaud61} and \cite{NerloveWallis66}. Maddala and Rao \cite{MaddalaRao73} and Park \cite{Park75} showed by simulations that alternative tests significantly outperform the inappropriate one even on small-sized samples. Inder \cite{Inder84}, \cite{Inder86} and Durbin \cite{Durbin86} went even deeper in the approximation of the critical values and distributions of the alternative tests under the null hypothesis. Afterwards, additional improvements were brought by King and Wu \cite{KingWu91} and more recently, Stocker \cite{Stocker06} gave substantial contributions to the study of the asymptotic bias in the Durbin-Watson statistic resulting from the presence of lagged dependent random variables.
\\ \vspace{-1ex} \par

Our purpose is to investigate several open questions left unanswered during four decades on the Durbin-Watson statistic \cite{Durbin70}, \cite{Durbin86}, \cite{NerloveWallis66}. We shall focus our attention on the first-order autoregressive process given, for all $n\geq 1$, by 
\begin{equation}  
\label{AR}
\vspace{1ex}
\left\{
\begin{array}[c]{ccccc}
X_{n} & = & \theta X_{n-1} & + & \veps_n \vspace{1ex}\\
\veps_n & = & \rho \veps_{n-1} & + & V_n 
\end{array}
\right.
\end{equation}
where the unknown parameters $\vert \theta \vert < 1$, $\vert \rho \vert < 1$.
Via an extensive use of the theory of martingales \cite{Duflo97}, \cite{HallHeyde80}, we shall provide a sharp and rigorous analysis on the asymptotic behavior of the least squares estimators of $\theta$ and $\rho$. The previous results of convergence were only established in probability \cite{Malinvaud61}, \cite{NerloveWallis66}. We shall prove the almost sure convergence as well as the asymptotic normality of the least squares estimators of $\theta$ and $\rho$. We will deduce the almost sure convergence and the asymptotic normality for the Durbin-Watson statistic. Therefore, we shall be in the position to propose a new bilateral test for residual autocorrelation under the null hypothesis as well as under the alternative hypothesis.
\\ \vspace{1ex} \par
The paper is organized as follows. Section 2 is devoted to the estimation of the autoregressive parameter. We establish the almost sure convergence of the least squares estimator $\widehat{\theta}_{n}$ to the limiting value 
\begin{equation}
\label{thetastar}
\theta^{*}= \frac{\theta + \rho}{1 + \theta\rho}.
\end{equation}
One can observe that $\theta^{*}=\theta$ if and only if $\rho=0$. The asymptotic normality of 
$\widehat{\theta}_{n}$ as well as the quadratic strong law and the law of iterated logarithm are also provided. 
Section 3 deals with the estimation of the serial correlation parameter. We prove the almost sure convergence of the least squares estimator $\widehat{\rho}_{n}$ to
\begin{equation}
\label{rhostar}
\vspace{1ex}
\rho^{*}=\theta \rho \theta^{*}= \frac{\theta \rho(\theta + \rho)}{1 + \theta\rho}.
\end{equation}
As before, the asymptotic normality of $\widehat{\rho}_{n}$, the quadratic strong law and the law of iterated logarithm are also provided. It allows us to establish in Section 4 the almost sure convergence of the Durbin-Watson statistic $\widehat{D}_{n}$ to 
\begin{equation}
\label{Dstar}
\vspace{1ex}
D^{*}= 2(1-\rho^{*})
\end{equation}
together with its asymptotic normality. Our sharp analysis on the asymptotic behavior of $\widehat{D}_{n}$ 
is true whatever the values of the parameters $\theta$ and $\rho$ inside the interval $] \! -1,1[$. 
Consequently, we are able in Section 4 to propose a new bilateral statistical test for residual autocorrelation. 
A short conclusion is given in Section 5. 
All the technical proofs of Sections 2, 3, and 4 are postponed in Appendices A, B, and C, respectively.


\section{ON THE AUTOREGRESSIVE PARAMETER}


Consider the first-order autoregressive process given by \eqref{AR} where the initial values
 $X_0$ and $\veps_0$ may be arbitrarily chosen. In all the sequel,  we assume that $(V_{n})$ is a sequence of square-integrable, independent 
 and identically distributed random variables with zero mean and variance $\sigma^2 > 0$. In order to estimate the unknown
 parameter $\theta$, it is natural to make use of the least squares estimator $\wh{\theta}_n$ which minimizes
 $$
 \Delta_n(\theta)=\sum_{k=1}^n (X_k - \theta X_{k-1})^2.
 $$
Consequently, we obviously have for all $n \geq 1$,
\begin{equation}  
\label{THETA_EST}
\wh{\theta}_{n} = \frac{\sum_{k=1}^{n} X_{k} X_{k-1}}{\sum_{k=1}^{n} X_{k-1}^2}.
\end{equation}

Our first result concerns the almost sure convergence of $\wh{\theta}_{n}$ to the limiting value $\theta^{*}$
given by \eqref{thetastar}. One can observe that the convergence in probability of $\wh{\theta}_{n}$ to $\theta^{*}$
was already proven in \cite{Malinvaud61}, \cite{NerloveWallis66}. We improve this previous result by establishing
the almost sure convergence of $\wh{\theta}_{n}$ to $\theta^{*}$.

\begin{thm}
\label{THM_ASCVGTHETA}
We have the almost sure convergence
\begin{equation}  
\label{ASCVGTHETA}
\lim_{n\rightarrow \infty} \wh{\theta}_{n} = \theta^{*} \hspace{1cm} \textnormal{a.s.}
\end{equation}
\end{thm}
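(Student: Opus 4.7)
The plan is to rewrite $(X_n)$ as a stable AR(2) driven by the i.i.d.\ innovations $V_n$, prove the almost sure convergence of the relevant empirical second moments via the strong law for martingales, and then recover $\theta^{*}$ by a short algebraic computation. Substituting $\veps_n=X_n-\theta X_{n-1}$ into the second line of \eqref{AR} yields
$$X_n = (\theta+\rho)X_{n-1} - \theta\rho\, X_{n-2} + V_n,$$
whose characteristic polynomial factorizes as $(z-\theta)(z-\rho)$. Since $|\theta|<1$ and $|\rho|<1$, this recursion is stable, and its stationary autocovariances $\gamma(0)=\dE[X_n^2]$ and $\gamma(1)=\dE[X_nX_{n-1}]$ satisfy the Yule--Walker relation $\gamma(1)=(\theta+\rho)\gamma(0)-\theta\rho\,\gamma(1)$, hence $\gamma(1)/\gamma(0)=(\theta+\rho)/(1+\theta\rho)=\theta^{*}$.

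\textbf{Key steps.} First, I would introduce the natural filtration $\cF_n=\sigma(V_1,\dots,V_n,X_0,\veps_0)$ and observe that $M_n=\sum_{k=1}^{n}X_{k-1}V_k$ is a square-integrable $(\cF_n)$-martingale with predictable bracket $\langle M\rangle_n=\sigma^2\sum_{k=1}^{n}X_{k-1}^2$. Second, a Lyapunov/energy argument on the AR(2) recursion, using the fact that the companion matrix has spectral radius $\max(|\theta|,|\rho|)<1$, controls $\sum_{k=1}^{n}X_{k-1}^2$ and shows $\sum_{k=1}^{n}X_{k-1}^2\to\infty$ a.s. at linear rate, independently of the (possibly non-stationary) initial values $X_0,\veps_0$. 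The strong law for martingales then gives
$$\frac{M_n}{\sum_{k=1}^{n}X_{k-1}^2}\longrightarrow 0 \quad\text{a.s.},\qquad \frac{1}{n}\sum_{k=1}^{n}X_{k-1}^2\longrightarrow \gamma(0),\qquad \frac{1}{n}\sum_{k=1}^{n}X_{k-1}X_{k-2}\longrightarrow \gamma(1) \quad\text{a.s.}$$

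\textbf{Conclusion.} Using the AR(2) identity $X_k X_{k-1} = (\theta+\rho)X_{k-1}^2-\theta\rho\, X_{k-1}X_{k-2}+V_kX_{k-1}$ and summing, I get
$$\wh{\theta}_n = \frac{\sum_{k=1}^{n}X_kX_{k-1}}{\sum_{k=1}^{n}X_{k-1}^2} = (\theta+\rho)\,-\,\theta\rho\,\frac{\sum_{k=1}^{n}X_{k-1}X_{k-2}}{\sum_{k=1}^{n}X_{k-1}^2}\,+\,\frac{M_n}{\sum_{k=1}^{n}X_{k-1}^2}.$$
Letting $n\to\infty$ and applying the three convergences above yields $\wh{\theta}_n\to (\theta+\rho)-\theta\rho\,\theta^{*}$ a.s., and the identity $(\theta+\rho)-\theta\rho\,\theta^{*}=\theta^{*}(1+\theta\rho)-\theta\rho\,\theta^{*}=\theta^{*}$ closes the argument.

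\textbf{Main obstacle.} The delicate point is the almost sure linear lower bound on $\sum X_{k-1}^2$ together with the strong law for the martingale $(M_n)$; these must be handled without appealing to strict stationarity, since the initial pair $(X_0,\veps_0)$ is arbitrary. Concretely, one needs a uniform contraction estimate for the companion matrix of the AR(2) to argue that the transient part decays exponentially and leaves a linearly growing stationary component, so that $\langle M\rangle_n\to\infty$ a.s. and the standard strong law for martingales (see \cite{Duflo97}, \cite{HallHeyde80}) applies.
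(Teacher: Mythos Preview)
Your decomposition and the paper's share the same starting point (the AR(2) form, the martingale $M_n=\sum X_{k-1}V_k$, and the strong law $M_n/\langle M\rangle_n\to 0$), but the route you take has a logical gap that the paper avoids by a simple algebraic trick. You assert that ``the strong law for martingales'' gives the ergodic limits $\tfrac{1}{n}\sum X_{k-1}^2\to\gamma(0)$ and $\tfrac{1}{n}\sum X_{k-1}X_{k-2}\to\gamma(1)$; it does not. The martingale SLLN yields $M_n/S_{n-1}\to 0$, but identifying the limits of $S_{n-1}/n$ and $P_{n-1}/n$ requires a separate argument. Worse, the ratio $\sum X_{k-1}X_{k-2}/\sum X_{k-1}^2$ that you need is, up to a boundary term, $P_{n-1}/S_{n-1}\approx\wh\theta_{n-1}$, so your conclusion reads $\wh\theta_n=(\theta+\rho)-\theta\rho\,\wh\theta_{n-1}+o(1)$: the limit $\theta^*$ is the unique fixed point, but you still have to \emph{prove} convergence, not just identify the candidate. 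In the paper's organization, the ergodic limit $S_n/n\to\ell$ (Lemma~\ref{CVGM2X_LEMMA}) is established only \emph{after} Theorem~\ref{THM_ASCVGTHETA}, and its proof actually uses $P_n=\theta^{*}S_n+o(S_n)$ as input; so invoking it here would be circular within that framework.

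The paper closes this loop by one substitution: since $P_{n-1}=P_n-X_nX_{n-1}$, the identity $P_n=(\theta+\rho)S_{n-1}-\theta\rho P_{n-1}+M_n+\text{const}$ becomes
\[
(1+\theta\rho)P_n=(\theta+\rho)S_{n-1}+M_n+R_n,\qquad R_n=\theta\rho X_nX_{n-1}+\text{const},
\]
so that $\wh\theta_n-\theta^{*}=(1+\theta\rho)^{-1}(M_n+R_n)/S_{n-1}$. Now only the qualitative facts $n=O(S_n)$ (Corollary~1.3.25 in \cite{Duflo97}), $M_n/S_{n-1}\to 0$, and $X_n^2=o(n)$ (Lemma~\ref{X_LEMMA}) are needed, and none of the explicit second-moment limits. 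Your approach can be salvaged---for instance by a genuine ergodic argument for the stable VAR(1) vector $(X_n,X_{n-1})$---but as written the step from ``SLLN for martingales'' to the convergence of the empirical covariances is not justified, and the paper's route is both shorter and self-contained.
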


Our second result deals with the asymptotic normality of $\wh{\theta}_{n}$ where we denote
\begin{equation}
\label{VARTHETA}
\sigma^2_\theta=\frac{(1-\theta^2)(1-\theta\rho)(1-\rho^2)}{(1+\theta\rho)^3}.
\end{equation}

\begin{thm}
\label{THM_CLTTHETA}
Assume that $(V_{n})$ has a finite moment of order  $4$. Then, we have the asymptotic normality
\begin{equation}  
\label{CLTTHETA}
\sqrt{n} \left( \wh{\theta}_{n} - \theta^{*} \right) \liml \cN ( 0, \sigma^2_\theta).
\end{equation}
\end{thm}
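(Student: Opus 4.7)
The plan is to reduce the central limit theorem to a standard martingale CLT by isolating, inside the numerator of $\wh{\theta}_n - \theta^*$, the fundamental martingale
$$M_n := \sum_{k=1}^n X_{k-1} V_k$$
adapted to $\cF_n = \sigma(X_0,\veps_0,V_1,\ldots,V_n)$. Starting from $\wh{\theta}_n - \theta^* = S_n/A_n$ with $A_n = \sum_{k=1}^n X_{k-1}^2$ and $S_n = \sum_{k=1}^n X_{k-1}(X_k - \theta^* X_{k-1})$, I would substitute the two recursions in \eqref{AR} to obtain $S_n = (\theta - \theta^*) A_n + \rho \sum_{k=1}^n X_{k-1} \veps_{k-1} + M_n$. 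The next move is to eliminate the innovations via $\veps_{k-1} = X_{k-1} - \theta X_{k-2}$ for $k \geq 2$ together with the index-shift identity $\sum_{k=2}^n X_{k-1} X_{k-2} = \wh{\theta}_n A_n - X_n X_{n-1}$. After the cancellations driven by the two scalar identities $\theta - \theta^* + \rho = \rho^*$ and $\rho^* = \theta\rho\theta^*$, this should collapse into the clean relation
$$(1 + \theta\rho)(\wh{\theta}_n - \theta^*) A_n = M_n + \theta\rho\, X_n X_{n-1} + \rho\bigl(X_0 \veps_0 - X_0^2\bigr).$$

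From here the argument becomes modular. Theorem \ref{THM_ASCVGTHETA} (or more precisely the almost sure convergence of $A_n/n$ underlying its proof) gives $A_n/n \to \ell$ a.s., where $\ell = \sigma^2(1+\theta\rho)/[(1-\theta^2)(1-\rho^2)(1-\theta\rho)]$ is the stationary variance of $X_n$. The predictable bracket $\langle M\rangle_n = \sigma^2 A_n$ therefore satisfies $\langle M\rangle_n / n \to \sigma^2 \ell$ a.s., and the assumed finite fourth moment of $V_n$ provides a uniform $L^4$ control on the increments $X_{k-1} V_k$, from which a Lyapunov-type argument verifies the conditional Lindeberg condition. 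The martingale CLT of Hall and Heyde then delivers $M_n/\sqrt{n} \liml \cN(0, \sigma^2 \ell)$. The boundary contributions $\theta\rho X_n X_{n-1}$ and $\rho(X_0 \veps_0 - X_0^2)$ divided by $\sqrt{n}$ are negligible in probability, since $X_n$ is uniformly $L^2$-bounded and the second term is deterministically bounded. Slutsky's theorem applied to the central identity then yields
$$\sqrt{n}\bigl(\wh{\theta}_n - \theta^*\bigr) \liml \cN\!\left(0, \frac{\sigma^2}{(1+\theta\rho)^2 \ell}\right),$$
and substituting the explicit expression for $\ell$ reduces the asymptotic variance to exactly $\sigma^2_\theta$ in \eqref{VARTHETA}.

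The main obstacle is the algebraic identity of the first paragraph: the two non-martingale contributions $(\theta-\theta^*) A_n$ and $\rho \sum_{k=1}^n X_{k-1} \veps_{k-1}$, each of order $n$, must collapse up to boundary remainders into $-\theta\rho(\wh{\theta}_n - \theta^*) A_n$. This cancellation hinges on the precise definitions \eqref{thetastar} and \eqref{rhostar} of $\theta^*$ and $\rho^*$, and verifying it cleanly demands careful bookkeeping of the first and last indices of each partial sum. Once the identity is secured, the asymptotic normality follows as an almost immediate corollary of the martingale CLT applied to $M_n$.
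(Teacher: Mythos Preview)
Your proposal is correct and follows essentially the same route as the paper: the central identity $(1+\theta\rho)(\wh{\theta}_n-\theta^*)A_n = M_n + \theta\rho X_nX_{n-1} + \rho(X_0\veps_0-X_0^2)$ is exactly the paper's decomposition \eqref{DECOTHETA_EST} with remainder $R_n$, and the conclusion comes from the martingale CLT for $M_n$ combined with the convergence $A_n/n\to\ell$ established in Lemma \ref{CVGM2X_LEMMA}. The only cosmetic difference is that the paper kills the boundary term $X_nX_{n-1}/\sqrt{n}$ almost surely via $|X_n|=o(n^{1/4})$ (Lemma \ref{X_LEMMA} with $a=4$), whereas you argue in probability from $L^2$-boundedness; both are adequate for Slutsky's lemma.
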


\begin{rem}
In the  well-known case where the residuals are not correlated, which means that $\rho = 0$, we clearly have $\theta^{*} = \theta$,
$\sigma^2_\theta= 1- \theta^2$ and we find again the asymptotic normality
$$ \sqrt{n}\left( \wh{\theta}_{n} - \theta \right) \liml \cN ( 0, 1-\theta^2). $$
\end{rem}

After establishing the almost sure convergence of the estimator $\wh{\theta}_{n}$ and its asymptotic normality, we focus our attention
on the almost sure rates of convergence. 

\begin{thm}
\label{THM_LILTHETA}
Assume that $(V_{n})$ has a finite moment of order  $4$. Then, we have the quadratic strong law
\begin{equation}  
\label{QSLTHETA}
\lim_{n\rightarrow \infty} \frac{1}{\log n} \sum_{k=1}^{n} \left( \wh{\theta}_{k} - \theta^{*} \right)^2=\sigma^2_\theta
\hspace{1cm} \textnormal{a.s.}
\end{equation}
 where $\sigma^2_\theta$ is given by \eqref{VARTHETA}. In addition, we also have the law of iterated logarithm
\begin{eqnarray}  
\limsup_{n \rightarrow \infty} \left(\frac{n}{2 \log \log n} \right)^{1/2}
\left(\wh{\theta}_{n}-\theta^{*} \right)
&=& - \liminf_{n \rightarrow \infty}
\left(\frac{n}{2 \log\log n}\right)^{1/2} 
\left(\wh{\theta}_{n}-\theta^{*}\right)  \notag \\
&=& \sigma_\theta
\hspace{1cm}\textnormal{a.s.}
\label{LILTHETA}
\end{eqnarray}
Consequently, 
\begin{equation}
\label{LILSUPTHETA}  
\limsup_{n \rightarrow \infty} \left(\frac{n}{2 \log \log n} \right)
\left(\wh{\theta}_{n}-\theta^{*} \right)^2=\sigma^2_\theta
\hspace{1cm}\textnormal{a.s.}
\end{equation} 
\end{thm}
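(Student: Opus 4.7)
The strategy is to reduce both the quadratic strong law and the law of iterated logarithm for $\wh{\theta}_n$ to the corresponding almost sure rates of convergence for the square-integrable martingale
$$
M_n = \sum_{k=1}^n X_{k-1} V_k,
$$
whose predictable quadratic variation is $\langle M \rangle_n = \sigma^2 S_n$ with $S_n = \sum_{k=1}^n X_{k-1}^2$. Writing $X_k - \theta^* X_{k-1} = (\theta - \theta^*) X_{k-1} + \veps_k$ with $\theta - \theta^* = -\rho(1-\theta^2)/(1+\theta\rho)$, and then substituting $\veps_k = \rho \veps_{k-1} + V_k$ together with $\veps_{k-1} = X_{k-1} - \theta X_{k-2}$, a short rearrangement yields the key identity
$$
(1+\theta\rho)\,S_n\,\bigl(\wh{\theta}_n - \theta^*\bigr) \;=\; M_n \;+\; \theta\rho\, X_n X_{n-1} \;+\; r_n,
$$
where $r_n$ gathers the $O(1)$ boundary contributions due to the initial values $X_0$ and $\veps_0$. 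This realizes $\wh{\theta}_n - \theta^*$ as an explicit martingale ratio, up to negligible terms.

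To calibrate this martingale I would use the almost sure limit $S_n/n \to \ell$ already needed in the proof of Theorem \ref{THM_ASCVGTHETA}, with $\ell = \sigma^2(1+\theta\rho)/[(1-\theta^2)(1-\rho^2)(1-\theta\rho)]$ obtained from the Yule--Walker equations for the AR(2) recursion $X_n = (\theta+\rho) X_{n-1} - \theta\rho X_{n-2} + V_n$. A direct computation checks the crucial identification
$$
\frac{\sigma^2}{(1+\theta\rho)^2 \ell} \;=\; \sigma_\theta^2,
$$
so that the martingale rate matches the constant in \eqref{LILTHETA} and \eqref{QSLTHETA}. The finite fourth moment of $(V_n)$ propagates to $\sup_n \dE[X_n^4] < \infty$ by stability of the AR(2), and a standard Borel--Cantelli argument then yields $X_n = O(\sqrt{\log n})$ almost surely; consequently the remainder $\theta\rho X_n X_{n-1} + r_n$ is $O(\log n)$, negligible both at the LIL rate $\sqrt{n\log\log n}$ and at the QSL rate.

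The law of iterated logarithm \eqref{LILTHETA} then follows from the classical LIL for real locally square-integrable martingales (see e.g. \cite{Duflo97}, \cite{HallHeyde80}): the Lindeberg condition is ensured by the uniform fourth moments of the increments $X_{k-1} V_k$, and together with $\langle M \rangle_n \sim \sigma^2 \ell n$ it yields
$$
\limsup_{n \to \infty} \frac{M_n}{\sqrt{2\sigma^2 \ell n \log\log n}} \;=\; -\liminf_{n \to \infty} \frac{M_n}{\sqrt{2\sigma^2 \ell n \log\log n}} \;=\; 1 \quad \textnormal{a.s.}
$$
Dividing the key identity by $(1+\theta\rho)S_n \sim (1+\theta\rho)\ell n$ and plugging in the identification above produces \eqref{LILTHETA}, while \eqref{LILSUPTHETA} is immediate by squaring. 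For the quadratic strong law \eqref{QSLTHETA} I would invoke a quadratic strong law for locally square-integrable martingales, which applied to $M_n$ gives
$$
\frac{1}{\log n} \sum_{k=1}^n \frac{M_k^2}{k^2} \longrightarrow \sigma^2 \ell \quad \textnormal{a.s.},
$$
and transfer this to $\sum_{k=1}^n (\wh{\theta}_k - \theta^*)^2$ via $S_k \sim \ell k$ together with the identification of $\sigma_\theta^2$. The main obstacle lies precisely in this transfer: one must show that the quadratic contributions of the remainder term $\theta\rho X_k X_{k-1}/[(1+\theta\rho) S_k]$ and of all the cross terms with the principal martingale contribution are $o(\log n)$ almost surely, which is handled by a Kronecker lemma combined with the bound $X_n = O(\sqrt{\log n})$ established above.
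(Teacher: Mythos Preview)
Your overall architecture matches the paper's: both proofs start from the same decomposition
\[
(1+\theta\rho)\,S_{n-1}\bigl(\wh{\theta}_n-\theta^*\bigr)=M_n+R_n,\qquad R_n=\theta\rho X_nX_{n-1}+\text{boundary},
\]
identify $\sigma^2/\bigl((1+\theta\rho)^2\ell\bigr)=\sigma_\theta^2$, and then invoke a martingale quadratic strong law and a martingale LIL for $(M_n)$, checking that the contribution of $R_n$ is negligible at both rates.

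There is, however, one concrete error. You claim that $\sup_n\dE[X_n^4]<\infty$ plus Borel--Cantelli gives $X_n=O(\sqrt{\log n})$ a.s. This fails: Markov's inequality only yields $\dP(|X_n|>c\sqrt{\log n})\le C/(c^4\log^2 n)$, and $\sum 1/\log^2 n$ diverges, so the first Borel--Cantelli lemma does not apply. With a fourth moment you cannot do better than $|X_n|=o(n^{1/4})$ a.s.; this is exactly what the paper uses (their Lemma~\ref{X_LEMMA} with $a=4$, proved pathwise from the AR representation rather than via moments). Fortunately $o(n^{1/4})$ is enough: it gives $R_n=o(\sqrt{n})$, hence $R_n=o(\sqrt{n\log\log n})$ for the LIL, and $(R_k/S_{k-1})^2=o(1/k)$ so that $\sum_{k\le n}(R_k/S_{k-1})^2=o(\log n)$ for the QSL. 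Your proof goes through once you replace the wrong $O(\sqrt{\log n})$ by this weaker but correct bound.

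A secondary difference is methodological. For the QSL you quote the version $\tfrac{1}{\log n}\sum_{k\le n} M_k^2/k^2\to\sigma^2\ell$ and transfer it via $S_k\sim\ell k$. The paper instead applies Bercu's quadratic strong law in its ``explosion coefficient'' form,
\[
\lim_{n\to\infty}\frac{1}{\log n}\sum_{k=1}^n f_k\,\frac{M_k^2}{S_{k-1}}=\sigma^2\quad\text{a.s.},\qquad f_k=\frac{X_k^2}{S_k},
\]
and controls the remainder with the telescoping inequality $f_k\le\log S_k-\log S_{k-1}$ before converting to the unweighted sum. Likewise, for the LIL the paper does not invoke a Lindeberg condition but verifies the Stout-type criterion $\sum_k X_k^4/k^2<\infty$, which follows from $\sum_{k\le n}X_k^4=O(n)$. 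Both routes are valid; the paper's has the advantage of being self-contained with respect to the cited martingale theorems.
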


\begin{proof} The proofs are given in Appendix A. 
\end{proof}

\begin{rem}
It clearly follows from \eqref{LILSUPTHETA} that
\begin{equation}
\label{ASRATETHETA}  
\left( \wh{\theta}_{n}-\theta^{*} \right)^2= O\left( \frac{\log \log n}{n}\right)
\hspace{1cm}\textnormal{a.s.}
\end{equation} 
This almost sure rate of convergence will be useful in all the sequel.
\end{rem}


\section{ON THE SERIAL CORRELATION PARAMETER}


This section is devoted to the estimation of the serial correlation parameter $\rho$.  First of all, it is necessary
to evaluate, at step $n$, the least squares residuals given, for all $1\leq k \leq n$, by
\begin{equation}  
\label{RHO_EST_RES}
\wh{\veps}_{k} = X_{k} - \wh{\theta}_{n} X_{k-1}.
\end{equation}
The initial value $\wh{\veps}_0$ may be arbitrarily chosen
and we take $\wh{\veps}_0 = X_0$.
Then, a natural way to estimate $\rho$ is to make use of the least squares estimator 
\begin{equation}  
\label{RHO_EST}
\wh{\rho}_{n} = \frac{\sum_{k=1}^{n} \wh{\veps_{k}} \wh{\veps}_{k-1}}{\sum_{k=1}^{n} \wh{\veps}_{k-1}^{\, \, 2}}.
\end{equation}
The asymptotic behavior of $\wh{\theta}_{n}$ and $\wh{\rho}_{n}$ are quite similar.
However, one can realize that the results of this section are much more tricky to establish than those of the previous one.
We first state the almost sure convergence of $\wh{\rho}_{n}$ to $\rho^{*}$.

\begin{thm}
\label{THM_ASCVGRHO}
We have the almost sure convergence
\begin{equation}  
\label{ASCVGRHO}
\lim_{n\rightarrow \infty} \wh{\rho}_{n} = \rho^{*} \hspace{1cm} \textnormal{a.s.}
\end{equation}
\end{thm}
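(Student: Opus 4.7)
The plan is to reduce $\wh{\rho}_n$ to a rational expression in $\wh{\theta}_n$ and in normalized quadratic sums of $(X_n)$. Substituting $\wh{\veps}_k=X_k-\wh{\theta}_nX_{k-1}$ into \eqref{RHO_EST} and expanding the products gives
\begin{equation*}
\wh{\rho}_n \,=\, \frac{\sum_{k=1}^n X_k X_{k-1}\,-\,\wh{\theta}_n\sum_{k=1}^n X_k X_{k-2}\,-\,\wh{\theta}_n\sum_{k=1}^n X_{k-1}^2\,+\,\wh{\theta}_n^{\,2}\sum_{k=1}^n X_{k-1}X_{k-2}}{\sum_{k=1}^n X_{k-1}^2\,-\,2\wh{\theta}_n\sum_{k=1}^n X_{k-1}X_{k-2}\,+\,\wh{\theta}_n^{\,2}\sum_{k=1}^n X_{k-2}^2}.
\end{equation*}
Writing $\Sigma_n=\sum_{k=1}^n X_{k-1}^2$, the lag-$0$ and lag-$1$ averages $\Sigma_n^{-1}\sum X_{k-2}^2\to 1$, $\Sigma_n^{-1}\sum X_kX_{k-1}\to\theta^{*}$ and $\Sigma_n^{-1}\sum X_{k-1}X_{k-2}\to\theta^{*}$ are already available from Theorem~\ref{THM_ASCVGTHETA} and its Appendix~A proof. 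The only genuinely new object I need is the lag-$2$ empirical covariance $T_n=\sum_{k=1}^n X_kX_{k-2}$.

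To handle $T_n$, I would combine the two lines of \eqref{AR} into the AR$(2)$ recursion
\begin{equation*}
X_n\,=\,(\theta+\rho)X_{n-1}\,-\,\theta\rho\, X_{n-2}\,+\,V_n,
\end{equation*}
whose characteristic polynomial $(1-\theta z)(1-\rho z)$ is stable since $|\theta|,|\rho|<1$. Multiplying by $X_{n-2}$ and summing yields
\begin{equation*}
T_n\,=\,(\theta+\rho)\sum_{k=1}^n X_{k-1}X_{k-2}\,-\,\theta\rho\sum_{k=1}^n X_{k-2}^2\,+\,\sum_{k=1}^n X_{k-2}V_k.
\end{equation*}
The last term is a square-integrable martingale with respect to $\cF_n=\sigma(V_1,\ldots,V_n)$ whose predictable variation is of order $\Sigma_n$, so the strong law of large numbers for martingales gives $\sum X_{k-2}V_k = o(\Sigma_n)$ a.s. Dividing the display by $\Sigma_n$ and passing to the limit then yields $\Sigma_n^{-1}T_n\to (\theta+\rho)\theta^{*}-\theta\rho$ a.s.

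With these ingredients in hand, I plug back into the expanded form of $\wh{\rho}_n$ and invoke $\wh{\theta}_n\to\theta^{*}$ a.s. from Theorem~\ref{THM_ASCVGTHETA}. The denominator converges to $1-(\theta^{*})^2$, while the numerator converges to $\theta^{*}\bigl[(\theta^{*})^2-(\theta+\rho)\theta^{*}+\theta\rho\bigr]$. A short algebraic reduction using $\theta^{*}(1+\theta\rho)=\theta+\rho$ collapses the bracket to $\theta\rho(1-(\theta^{*})^2)$, so the ratio converges to $\theta\rho\theta^{*}=\rho^{*}$.

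The main obstacle is the lag-$2$ step. One has to establish beforehand that $\Sigma_n\to\infty$ a.s. (for instance by showing $\Sigma_n/n$ converges a.s. to the stationary variance of the AR$(2)$ process) so as to legitimately apply the martingale SLLN to $\sum X_{k-2}V_k$ and deduce that it is genuinely negligible compared with $\Sigma_n$. The rest is bookkeeping: tracking three distinct empirical covariances simultaneously and checking that the algebra produces exactly $\rho^{*}=\theta\rho\theta^{*}$. Note that, consistently with the statement of Theorem~\ref{THM_ASCVGRHO}, no moment beyond $\dE[V_1^2]<\infty$ is used.
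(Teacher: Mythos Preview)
Your proposal is correct and follows essentially the same route as the paper: expand the residual sums in terms of $S_n$, $P_n$ and the lag-$2$ sum $Q_n=\sum X_kX_{k-2}$, handle $Q_n$ via the AR$(2)$ recursion $X_n=(\theta+\rho)X_{n-1}-\theta\rho X_{n-2}+V_n$ and the martingale strong law applied to $\sum X_{k-2}V_k$, then take the ratio and simplify to $\theta\rho\theta^{*}=\rho^{*}$. The only cosmetic difference is that the paper normalizes numerator and denominator by $n$ (using $S_n/n\to\ell$ from Lemma~\ref{CVGM2X_LEMMA}) whereas you normalize by $\Sigma_n=S_{n-1}$; the growth condition $n=O(S_n)$ a.s.\ needed to justify the martingale step is supplied in Appendix~A.
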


Our next result deals with the joint asymptotic normality of $\wh{\theta}_{n}$ and $\wh{\rho}_{n}$. Denote
\begin{equation}
\label{VARRHO}
\sigma^2_\rho=\frac{(1-\theta \rho)}{(1+\theta\rho)^3}\big((\theta+\rho)^2(1+\theta \rho)^2+(\theta \rho)^2 (1-\theta^2)(1-\rho^2)\big).
\end{equation}
In addition, let $\Gamma$ be the semi-definite positive covariance matrix given by
\begin{equation}
\label{DEFGAMMA}
\Gamma= \begin{pmatrix}
\sigma^2_\theta & \ \theta \rho \sigma^2_\theta \\
\theta \rho \sigma^2_\theta & \sigma^2_\rho
\end{pmatrix}.
\end{equation}

\begin{thm}
\label{THM_CLTRHO}
Assume that $(V_{n})$ has a finite moment of order  $4$. Then, we have the joint asymptotic normality
\begin{equation}  
\label{CLTTHETARHO}
\sqrt{n} \begin{pmatrix}
\ \wh{\theta}_{n} - \theta^{*}  \\
\ \wh{\rho}_{n} - \rho^{*}
\end{pmatrix}
 \liml \cN \big( 0, \Gamma \big).
\end{equation}
In particular,
\begin{equation}  
\label{CLTRHO}
\sqrt{n} \Big( \wh{\rho}_{n} - \rho^{*}\Big)
 \liml \cN ( 0, \sigma^2_\rho).
\end{equation}
\end{thm}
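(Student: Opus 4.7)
The plan is to derive \eqref{CLTTHETARHO} via a bivariate martingale central limit theorem applied to an explicit pair of martingales driven by the innovations $(V_n)$, and then to transfer the result to the estimators by Slutsky's theorem. I would work with the filtration $\cF_n = \sigma(X_0, \veps_0, V_1, \ldots, V_n)$, so that $V_k$ is an $\cF_k$-martingale difference. Exploiting the AR(2) representation $X_k = (\theta+\rho) X_{k-1} - \theta\rho X_{k-2} + V_k$ that follows from \eqref{AR}, and setting $Z_k = X_k - \theta^* X_{k-1}$, the identities $(\theta+\rho) = \theta^*(1+\theta\rho)$ and $\rho^* = \theta\rho\theta^*$ yield
\[
Z_k = V_k + \theta\rho(\theta^* X_{k-1} - X_{k-2}), \qquad Z_k - \rho^* Z_{k-1} = V_k - \theta\rho(1-\theta^{*2})X_{k-2}.
\]
These innovation formulas motivate the introduction of the two $\cF_n$-martingales
\[
M_n^{(1)} = \sum_{k=1}^n X_{k-1} V_k, \qquad M_n^{(2)} = \sum_{k=1}^n Z_{k-1} V_k.
\]

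Next, I would show that $\sqrt n\,(\wh{\theta}_n - \theta^*, \wh{\rho}_n - \rho^*)^T$ equals an invertible linear transformation of $n^{-1/2}(M_n^{(1)}, M_n^{(2)})^T$ plus an $o_\dP(1)$ remainder. For the $\theta$-component this amounts to writing $\wh{\theta}_n - \theta^* = (\sum X_{k-1} Z_k)/(\sum X_{k-1}^2)$ and substituting the first innovation formula above, as is already implicit in Theorem~\ref{THM_CLTTHETA}. For the $\rho$-component I would expand $\wh{\veps}_k = Z_k - (\wh{\theta}_n - \theta^*) X_{k-1}$ and plug it into both $\sum \wh{\veps}_{k-1}(\wh{\veps}_k - \rho^* \wh{\veps}_{k-1})$ and $\sum \wh{\veps}_{k-1}^2$, using the second innovation formula on the leading piece. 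The quadratic-in-$(\wh{\theta}_n - \theta^*)$ remainders are $O(\log\log n) = o(\sqrt n)$ by \eqref{ASRATETHETA}; the linear remainders combine with the ergodic limits of $\sum Z_{k-1} X_{k-2}$-type sums to couple the two martingales and produce the off-diagonal structure of $\Gamma$.

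The bivariate martingale CLT (for example, Corollary~2.1.10 in \cite{Duflo97} or Corollary~3.1 in \cite{HallHeyde80}) then applies: the Lindeberg condition follows from the assumed finite fourth moment of $V_n$ together with the $L^4$-stability of $(X_n)$, while the convergence of the normalized quadratic variation $n^{-1}\langle M\rangle_n$ reduces to almost sure convergence of the empirical second moments of the stationary process $(X_n, Z_n)$, whose limit is computed from the stationary covariances of $X$ and $\veps$. Matching these with \eqref{VARTHETA}, \eqref{VARRHO} and \eqref{DEFGAMMA} is a direct algebraic verification, after which \eqref{CLTTHETARHO} follows by Slutsky and \eqref{CLTRHO} as an immediate consequence.

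The main obstacle I expect is the careful treatment of the linear-in-$(\wh{\theta}_n - \theta^*)$ remainder in the $\rho$-numerator. Because $\wh{\veps}_k$ depends on the terminal $\wh{\theta}_n$ rather than on any $\cF_{k-1}$-measurable quantity, these linear remainders are not themselves martingale transforms; controlling them requires substituting the martingale expansion of $\wh{\theta}_n - \theta^*$ back in and showing that the resulting bilinear contribution can be absorbed into a fixed linear combination of the two target martingales. This coupling is precisely what produces the off-diagonal entry $\theta\rho\sigma^2_\theta$ of $\Gamma$, and getting the signs and coefficients right will be the delicate part of the argument.
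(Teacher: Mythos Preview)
Your proposal is correct and follows essentially the same route as the paper: express the vector of estimation errors as a (data-dependent but convergent) linear map applied to a two-dimensional martingale with increments driven by $V_k$, verify Lindeberg via the fourth-moment assumption and the $L^4$ control of $(X_n)$, identify the limiting quadratic variation from the almost sure convergence of the empirical second moments, and conclude by Slutsky. The one cosmetic difference is in the choice of basis martingales. The paper uses
\[
M_n=\sum_{k=1}^n X_{k-1}V_k,\qquad N_n=\sum_{k=2}^n X_{k-2}V_k,
\]
whereas you propose $M_n^{(1)}=M_n$ and $M_n^{(2)}=\sum Z_{k-1}V_k=M_n-\theta^{*}N_n$ (up to a boundary term). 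Since these differ only by an invertible linear change of coordinates, the two arguments are equivalent; your $Z$-based choice is arguably more natural for the $\rho$-component because $Z_k$ is the idealized residual, while the paper's $(X_{k-1},X_{k-2})$ pair is dictated directly by the AR(2) representation and leads to the simple limiting covariance $L=\sigma^2\ell\bigl(\begin{smallmatrix}1&\theta^{*}\\ \theta^{*}&1\end{smallmatrix}\bigr)$. The ``main obstacle'' you flag---that the linear-in-$(\wh{\theta}_n-\theta^{*})$ remainder must be re-expressed via the martingale expansion of $\wh{\theta}_n-\theta^{*}$ and absorbed---is exactly what the paper does: it collects those terms into $-(\wh{\theta}_n-\theta^{*})H_n$ with $H_n/S_{n-1}$ convergent, substitutes \eqref{DECOTHETA_EST}, and obtains a coefficient $T_n$ in front of $M_n$ that tends to $\theta\rho+(\theta^{*})^2$, which is precisely the mechanism producing the off-diagonal entry $\theta\rho\,\sigma_\theta^2$ of $\Gamma$.
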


\begin{rem}
\label{REM_GAMMA}
The covariance matrix $\Gamma$ is invertible if and only if $\theta\neq - \rho$ since one can see by a straightforward calculation that
$$
\det(\Gamma)=  \frac{\sigma^2_\theta (\theta+\rho)^2(1-\theta \rho)}{(1+\rho^2)}.
$$
Moreover, in the particular case where $\theta=- \rho$, 
$$
\sqrt{n}\ \wh{\theta}_{n}  \liml \cN \left( 0, \frac{1+\theta^2}{1-\theta^2}\right)
\hspace{1cm} \text{and} \hspace{1cm}
\sqrt{n}\ \wh{\rho}_{n}  \liml \cN \left( 0, \frac{\theta^4(1+\theta^2)}{1-\theta^2}\right).
$$
Finally, if the residuals are not correlated which means that $\rho = 0$, 
$$ \sqrt{n}\ \wh{\rho}_{n}  \liml \cN \left( 0, \theta^2 \right). $$
\end{rem}

The almost sure rates of convergence for $\wh{\rho}_{n}$ are as follows.

\begin{thm}
\label{THM_LILRHO}
Assume that $(V_{n})$ has a finite moment of order  $4$. Then, we have the quadratic strong law
\begin{equation}  
\label{QSLRHO}
\lim_{n\rightarrow \infty} \frac{1}{\log n} \sum_{k=1}^{n} \Big( \wh{\rho}_{k} - \rho^{*} \Big)^2=\sigma^2_\rho
\hspace{1cm} \textnormal{a.s.}
\end{equation}
 where $\sigma^2_\rho$ is given by \eqref{VARRHO}. In addition, we also have the law of iterated logarithm
\begin{eqnarray}  
\limsup_{n \rightarrow \infty} \left(\frac{n}{2 \log \log n} \right)^{1/2}
\Big(\wh{\rho}_{n}-\rho^{*} \Big)
&=& - \liminf_{n \rightarrow \infty}
\left(\frac{n}{2 \log\log n}\right)^{1/2} 
\Big(\wh{\rho}_{n}-\rho^{*}\Big)  \notag \\
&=& \sigma_\rho
\hspace{1cm}\textnormal{a.s.}
\label{LILRHO}
\end{eqnarray}
Consequently, 
\begin{equation}
\label{LILSUPRHO}  
\limsup_{n \rightarrow \infty} \left(\frac{n}{2 \log \log n} \right)
\Big(\wh{\rho}_{n}-\rho^{*} \Big)^2=\sigma^2_\rho
\hspace{1cm}\textnormal{a.s.}
\end{equation} 
\end{thm}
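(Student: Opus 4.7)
My plan is to follow the same blueprint as the proof of Theorem~\ref{THM_LILTHETA}, adapted to the residual-based estimator $\wh{\rho}_n$. Starting from the defining identity $\sum_{k=1}^n \wh{\veps}_{k-1}^{\,2}(\wh{\rho}_n - \rho^*) = \sum_{k=1}^n \wh{\veps}_{k-1}(\wh{\veps}_k - \rho^* \wh{\veps}_{k-1})$ and substituting $\wh{\veps}_k = \veps_k^{*} + (\theta^* - \wh{\theta}_n) X_{k-1}$, where $\veps_k^{*} := X_k - \theta^* X_{k-1}$ is the ``ideal'' residual, I would isolate a square-integrable martingale $M_n$, adapted to $\cF_k = \sigma(V_1, \ldots, V_k)$ and driven by the AR(2) representation $X_k = (\theta + \rho) X_{k-1} - \theta \rho X_{k-2} + V_k$, plus remainder terms involving products of $(\wh{\theta}_n - \theta^*)$ with quadratic functionals of $(X_n)$.

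I would then control the remainders using the almost sure rate \eqref{ASRATETHETA} together with the a.s.\ boundedness of normalized empirical moments of $(X_n)$, which follows from ergodicity of the AR(2) representation and from the fourth-moment assumption on $(V_n)$. This step would show that the remainders are $o(\log n)$ for the QSL and $o(\sqrt{n/\log \log n})$ a.s.\ for the LIL, so that only $M_n$ contributes at the required orders.

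Having reduced the problem to the analysis of $M_n$, I would apply the quadratic strong law for scalar square-integrable martingales from~\cite{Duflo97} to obtain \eqref{QSLRHO}, and the law of iterated logarithm for such martingales as stated in~\cite{HallHeyde80} to obtain \eqref{LILRHO}. The identification of the limiting constant with $\sigma_\rho^2$ requires computing the predictable quadratic variation of $M_n$ and renormalizing by the a.s.\ limit of $\frac{1}{n}\sum_{k=1}^n \wh{\veps}_{k-1}^{\,2}$; consistency with the variance already obtained in Theorem~\ref{THM_CLTRHO} provides a useful cross-check. Finally, \eqref{LILSUPRHO} follows at once from \eqref{LILRHO}.

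The main obstacle is the second step: since $\wh{\rho}_n$ is a quadratic functional of residuals that themselves depend on $\wh{\theta}_n$, the remainder analysis demands iterated uses of \eqref{ASRATETHETA} combined with sharp control of the empirical moments of $(X_n)$ up to order four. The precise identification of $\sigma_\rho^2$ from the bracket of $M_n$ is also noticeably more involved than its analog for $\wh{\theta}_n$, as already reflected in the more elaborate form of \eqref{VARRHO} compared to \eqref{VARTHETA}.
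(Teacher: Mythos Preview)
Your sketch follows the same architecture as the paper's proof: obtain a decomposition of $J_{n-1}(\wh{\rho}_n-\rho^{*})$ into a martingale plus remainder, show the remainder is negligible at the QSL and LIL scales, then invoke the scalar quadratic strong law and law of iterated logarithm for martingales. The paper, however, makes explicit a point your sketch leaves vague: the decomposition \eqref{DECORHOCLT} established in Appendix~B.2 naturally produces \emph{two} martingales $M_n=\sum X_{k-1}V_k$ and $N_n=\sum X_{k-2}V_k$, with a \emph{random} coefficient $T_n$ in front of $M_n$. The device in Appendix~B.3 is to freeze $T_n$ at its almost sure limit $\theta\rho+(\theta^{*})^2$ and absorb the discrepancy into the remainder, yielding
\[
J_{n-1}\big(\wh{\rho}_n-\rho^{*}\big)=\frac{\theta^{*}}{a}\,L_n+\zeta_n,
\qquad L_n=M_n-aN_n,\quad a=\frac{\theta+\rho}{\theta\rho+(\theta^{*})^2},
\]
so that a single scalar martingale $L_n$ carries the fluctuations. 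The negligibility of $\zeta_n$ is then obtained not only from \eqref{ASRATETHETA} but also from the QSL for $M_n$ itself, since $\zeta_n$ contains the term $(T_n-\text{const})M_n/(1+\theta\rho)$; one uses $\sum_{k\le n}(M_k/S_{k-1})^2=O(\log n)$ together with $T_n\to\text{const}$ to get $\sum_{k\le n}(\zeta_k/S_{k-1})^2=o(\log n)$. This linear-combination trick and the handling of the random coefficient are the heart of the argument---exactly the ``more involved'' step you anticipate---and your sketch would benefit from spelling them out rather than referring to ``a martingale $M_n$'' generically.
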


\begin{proof} The proofs are given in Appendix B. 
\end{proof}

\begin{rem}
We obviously deduce from \eqref{LILSUPRHO} that
\begin{equation}
\label{ASRATERHO}  
\Big( \wh{\rho}_{n}-\rho^{*} \Big)^2= O\left( \frac{\log \log n}{n}\right)
\hspace{1cm}\textnormal{a.s.}
\end{equation} 
\end{rem}

The estimators $\wh{\theta}_{n}$ and $\wh{\rho}_{n}$ are self-normalized. Consequently, the asymptotic variances
$\sigma^2_\theta$ and $\sigma^2_\rho$ do not depend on the variance $\sigma^2$ associated to the driven noise $(V_n)$.
We now focus our attention on the estimation of $\sigma^2$. The estimator $\wh{\rho}_{n}$ allows us to evaluate, at step $n$, the 
least squares residuals given, for all $1 \leq k \leq n$, by
$$
\wh{V}_{k} = \wh{\veps}_{k} - \wh{\rho}_{n} \wh{\veps}_{k-1}.
$$
Then, we propose to make use of 
\begin{equation*}  
\label{SIG_EST}
\wh{\sigma}_{n}^2 = \frac{1}{n} \sum_{k=1}^{n} \wh{V}_{k}^2 .
\end{equation*}
We have the almost sure convergence
\begin{equation}  
\label{ASCVGSIG}
\lim_{n\rightarrow \infty} \wh{\sigma}_{n} ^2= 
\frac{\sigma^2 \big((1 + \theta\rho)^2 - (\theta \rho)^2 (\theta + \rho)^2 \big)}{(1 - \theta \rho)(1 + \theta\rho)^3}
\hspace{1cm} \textnormal{a.s.}
\end{equation}
The proof is left to the reader as it follows essentially the same lines as that of \eqref{ASCVGRHO}.


\section{ON THE DURBIN-WATSON STATISTIC}


We shall now investigate the asymptotic behavior of the Durbin-Watson statistic 
\cite{DurbinWatson50}, \cite{DurbinWatson51}, \cite{DurbinWatson71} 
given, for all $n\geq 1$, by
\begin{equation}  
\label{DW_STAT}
\wh{D}_{n} = \frac{\sum_{k=1}^{n} (\wh{\veps}_{k} - \wh{\veps}_{k-1})^2}{\sum_{k=0}^{n} \wh{\veps}_{k}^{\,2}}.
\end{equation}
One can observe that $\wh{D}_{n}$ and $\wh{\rho}_{n}$ are asymptotically linked together by an affine transformation.
Consequently, the results of the previous section allow us to establish the asymptotic behavior of $\wh{D}_{n}$.
We start with the almost sure convergence of $\wh{D}_{n}$ to $D^{*}$.

\begin{thm}
\label{THM_ASCVGDW}
We have the almost sure convergence
\begin{equation}  
\label{ASCVGDW}
\lim_{n\rightarrow \infty} \wh{D}_{n} = D^{*} \hspace{1cm} \textnormal{a.s.}
\end{equation}
\end{thm}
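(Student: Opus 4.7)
The plan is to reduce the convergence of $\widehat{D}_n$ to that of $\widehat{\rho}_n$ together with the Cesàro-type convergence of the residual sum of squares. The starting point is the purely algebraic identity obtained by expanding the numerator of \eqref{DW_STAT}. Setting $\widehat{S}_n = \sum_{k=0}^{n} \widehat{\veps}_k^{\,2}$ and using that $\sum_{k=1}^{n} \widehat{\veps}_{k-1}^{\,2} = \widehat{S}_{n-1}$ together with the definition \eqref{RHO_EST} of $\widehat{\rho}_n$, which yields $\sum_{k=1}^{n}\widehat{\veps}_k\widehat{\veps}_{k-1} = \widehat{\rho}_n\, \widehat{S}_{n-1}$, one obtains
\begin{equation*}
\widehat{D}_n \;=\; 1 \;-\; \frac{\widehat{\veps}_0^{\,2}}{\widehat{S}_n} \;+\; (1 - 2\widehat{\rho}_n)\,\frac{\widehat{S}_{n-1}}{\widehat{S}_n}.
\end{equation*}
This is the identity that makes the ``affine link'' between $\widehat{D}_n$ and $\widehat{\rho}_n$ precise, and the whole theorem is essentially an asymptotic reading of it.

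Next, I would establish that $\widehat{S}_n/n$ converges almost surely to a strictly positive constant $\ell$. Expanding $\widehat{\veps}_k = X_k - \widehat{\theta}_n X_{k-1}$ gives
\begin{equation*}
\frac{1}{n}\sum_{k=1}^{n} \widehat{\veps}_k^{\,2} = \frac{1}{n}\sum_{k=1}^{n} X_k^2 \;-\; 2\widehat{\theta}_n \,\frac{1}{n}\sum_{k=1}^{n} X_k X_{k-1} \;+\; \widehat{\theta}_n^{\,2}\,\frac{1}{n}\sum_{k=1}^{n} X_{k-1}^2.
\end{equation*}
Since Theorem~\ref{THM_ASCVGTHETA} gives $\widehat{\theta}_n \to \theta^*$ a.s., the issue reduces to the almost sure convergence of $n^{-1}\sum X_k^2$ and $n^{-1}\sum X_k X_{k-1}$ to the stationary covariances $\gamma(0)$ and $\gamma(1)$ of the AR(2) representation $X_n = (\theta+\rho)X_{n-1} - \theta\rho X_{n-2} + V_n$ derived from \eqref{AR}. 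These strong laws are exactly the martingale/ergodic ingredients already exploited in Appendix A for Theorem~\ref{THM_ASCVGTHETA} (they imply the very convergence of $\widehat{\theta}_n$), so I would invoke them rather than reprove them. The limit
\begin{equation*}
\ell \;=\; \bigl(1 + (\theta^*)^2\bigr)\gamma(0) \;-\; 2\theta^*\gamma(1)
\end{equation*}
is the stationary variance of $X_k - \theta^* X_{k-1}$; it is strictly positive because this one-step prediction error is nondegenerate under the standing hypothesis $\sigma^2 > 0$.

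Once $\widehat{S}_n/n \to \ell > 0$ a.s.\ is in hand, the two ratios on the right-hand side of the identity above behave trivially: $\widehat{\veps}_0^{\,2}/\widehat{S}_n = X_0^2/\widehat{S}_n \to 0$ a.s., and $\widehat{S}_{n-1}/\widehat{S}_n \to 1$ a.s.\ because the successive increments $\widehat{\veps}_n^{\,2}$ are $o(n)$ while $\widehat{S}_n$ grows linearly. Combining with the almost sure convergence $\widehat{\rho}_n \to \rho^*$ from Theorem~\ref{THM_ASCVGRHO} yields
\begin{equation*}
\widehat{D}_n \;\longrightarrow\; 1 + (1 - 2\rho^*) \;=\; 2(1-\rho^*) \;=\; D^* \quad \textnormal{a.s.}
\end{equation*}

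The only nontrivial step is the second one, namely identifying the a.s.\ limit of $\widehat{S}_n/n$ and checking its positivity. Everything else is algebraic manipulation plus the two convergences already proven in Sections~2 and~3, so I expect the bulk of Appendix~C for this theorem to consist precisely in packaging the strong laws for the empirical covariances of the AR(2) process $(X_n)$ that were implicit in Appendix~A.
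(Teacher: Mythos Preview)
Your proposal is correct and follows essentially the same approach as the paper: your algebraic identity is equivalent to the paper's decomposition $\wh{D}_n = 2(1-f_n)(1-\wh{\rho}_n) + \xi_n$ with $f_n=\wh{\veps}_n^{\,2}/J_n$ and $\xi_n=(\wh{\veps}_n^{\,2}-\wh{\veps}_0^{\,2})/J_n$, and the rest of the argument proceeds identically via $\wh{\rho}_n\to\rho^*$ and $J_n/n\to\ell(1-(\theta^*)^2)>0$. One minor point: the convergence you call ``the only nontrivial step'' (your $\wh S_n/n\to\ell>0$) is already recorded as \eqref{CVGJN} in Appendix~B, so the paper simply invokes it rather than repackaging the empirical-covariance strong laws.
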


Our next result deals with the asymptotic normality of $\wh{D}_{n}$. It will be the keystone of a new bilateral statistical test deciding in particular, 
for a given significance level, whether residuals are autocorrelated or not. Denote $\sigma^2_D=4 \sigma^2_\rho$ where the
variance $\sigma^2_\rho$ is given by \eqref{VARRHO}.

\begin{thm}
\label{THM_CLTDW}
Assume that $(V_{n})$ has a finite moment of order  $4$. Then, we have the asymptotic normality
\begin{equation}  
\label{CLTDW}
\sqrt{n} \left( \wh{D}_{n} - D^{*} \right) \liml \cN ( 0, \sigma^2_D).
\end{equation}
\end{thm}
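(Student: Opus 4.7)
The plan is to reduce $\wh{D}_n$ to an affine function of $\wh{\rho}_n$ plus a vanishing remainder, and then invoke Theorem \ref{THM_CLTRHO} through Slutsky's lemma.

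First, I would derive an exact algebraic identity relating $\wh{D}_n$ and $\wh{\rho}_n$. Setting $S_n = \sum_{k=0}^n \wh{\veps}_k^{\,2}$, expanding $(\wh{\veps}_k - \wh{\veps}_{k-1})^2$, and using the boundary telescopings $\sum_{k=1}^n \wh{\veps}_k^{\,2} = S_n - \wh{\veps}_0^{\,2}$ and $\sum_{k=1}^n \wh{\veps}_{k-1}^{\,2} = S_n - \wh{\veps}_n^{\,2}$ together with the defining relation $\sum_{k=1}^n \wh{\veps}_k\wh{\veps}_{k-1} = \wh{\rho}_n (S_n - \wh{\veps}_n^{\,2})$ coming from \eqref{RHO_EST}, a short simplification yields
$$
\wh{D}_n \,=\, 2(1-\wh{\rho}_n) \,-\, \frac{\wh{\veps}_0^{\,2} + (1-2\wh{\rho}_n)\,\wh{\veps}_n^{\,2}}{S_n}.
$$
This is the explicit form of the \emph{asymptotically affine} link between $\wh{D}_n$ and $\wh{\rho}_n$ alluded to just after \eqref{DW_STAT}.

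Next, I would show that the remainder is $o_P(n^{-1/2})$. By the same ergodic arguments used in the proof of Theorem \ref{THM_ASCVGRHO}, one has $S_n/n \to \ell$ a.s.\ for some deterministic $\ell > 0$, so the contribution of $\wh{\veps}_0^{\,2} = X_0^2$ is already $O(1/n)$ a.s. For the right-endpoint term, I would write $\wh{\veps}_n = X_n - \wh{\theta}_n X_{n-1}$ and observe that $(X_n)$ satisfies the stable AR(2) recursion
$$
X_n \,=\, (\theta+\rho)\,X_{n-1} \,-\, \theta\rho\,X_{n-2} \,+\, V_n,
$$
whose characteristic roots $\theta$ and $\rho$ both lie in $(-1,1)$. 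This guarantees $\sup_n \dE[X_n^2] < \infty$, hence $X_n^2$ is tight; coupled with the a.s.\ boundedness of $\wh{\theta}_n$ and $\wh{\rho}_n$ from Theorems \ref{THM_ASCVGTHETA} and \ref{THM_ASCVGRHO}, this produces $\wh{\veps}_n^{\,2} = O_P(1)$ and therefore
$$
\sqrt{n}\,\bigl(\wh{D}_n - 2(1-\wh{\rho}_n)\bigr) \,=\, -\frac{\sqrt{n}}{S_n}\bigl(\wh{\veps}_0^{\,2} + (1-2\wh{\rho}_n)\wh{\veps}_n^{\,2}\bigr) \,\limp\, 0.
$$

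Finally, Theorem \ref{THM_CLTRHO} provides $\sqrt{n}(\wh{\rho}_n - \rho^*) \liml \cN(0,\sigma^2_\rho)$. Applying the affine map $x \mapsto 2(1-x)$ of slope $-2$ and recalling $D^* = 2(1-\rho^*)$ gives
$$
\sqrt{n}\bigl(2(1-\wh{\rho}_n) - D^*\bigr) \,\liml\, \cN(0, 4\sigma^2_\rho) \,=\, \cN(0, \sigma^2_D),
$$
and Slutsky's lemma together with the previous step delivers \eqref{CLTDW}. The delicate point is undoubtedly the control of the boundary contribution $\wh{\veps}_n^{\,2}$: the argument is clean only because $|\theta|, |\rho| < 1$ ensures tightness of $(X_n)$, preventing the remainder from matching the $\sqrt{n}$ CLT scale; any relaxation of the stability assumption would require a finer analysis of the endpoint terms.
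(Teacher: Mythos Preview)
Your proof is correct and follows the same strategy as the paper: derive the affine link between $\wh{D}_n$ and $\wh{\rho}_n$, show the boundary remainder is negligible at scale $\sqrt{n}$, and conclude via \eqref{CLTRHO} and Slutsky. The only substantive difference lies in how the endpoint term $\wh{\veps}_n^{\,2}$ is handled: the paper exploits the fourth-moment assumption through Lemma~\ref{X_LEMMA} with $a=4$ to obtain $\sup_{k\le n}\wh{\veps}_k^{\,2}=o(\sqrt{n})$ \emph{almost surely}, whereas you use only second-moment tightness to get $\wh{\veps}_n^{\,2}=O_P(1)$; both suffice for the CLT, though the paper's route also feeds directly into the quadratic strong law and the law of iterated logarithm in Theorem~\ref{THM_LILDW}. (A minor notational point: your $S_n$ is the paper's $J_n$; the symbol $S_n$ is reserved in the paper for $\sum X_k^2$.)
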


\begin{rem}
We immediately deduce from \eqref{CLTDW} that
\begin{equation}
\label{CVGXI2}
\frac{n}{\sigma_{D}^2} \left( \wh{D}_{n} - D^{*} \right)^2 \liml \chi^2
\end{equation}
where $\chi^2$ has a Chi-square distribution with one degree of freedom.
\end{rem}

Before providing our statistical test, we focus our attention on the almost sure rates of convergence for $\wh{D}_{n}$ 
which are based on the asymptotic linear relation between $\wh{D}_{n}$ and $\wh{\rho}_{n}$.

\begin{thm}
\label{THM_LILDW}
Assume that $(V_{n})$ has a finite moment of order  $4$. Then, we have the quadratic strong law
\begin{equation}  
\label{QSLDW}
\lim_{n\rightarrow \infty} \frac{1}{\log n} \sum_{k=1}^{n} \left( \wh{D}_{k} - D^{*} \right)^2=\sigma^2_D
\hspace{1cm} \textnormal{a.s.}
\end{equation}
In addition, we also have the law of iterated logarithm
\begin{eqnarray}  
\limsup_{n \rightarrow \infty} \left(\frac{n}{2 \log \log n} \right)^{1/2}
\left(\wh{D}_{n}-D^{*} \right)
&=& - \liminf_{n \rightarrow \infty}
\left(\frac{n}{2 \log\log n}\right)^{1/2} 
\left(\wh{D}_{n}-D^{*}\right)  \notag \\
&=& \sigma_D
\hspace{1cm}\textnormal{a.s.}
\label{LILDW}
\end{eqnarray}
Consequently, 
\begin{equation}
\label{LILSUPDW}  
\limsup_{n \rightarrow \infty} \left(\frac{n}{2 \log \log n} \right)
\left(\wh{D}_{n}-D^{*} \right)^2=\sigma^2_D
\hspace{1cm}\textnormal{a.s.}
\end{equation} 
\end{thm}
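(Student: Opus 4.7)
The plan is to reduce the study of $\wh{D}_n$ to that of $\wh{\rho}_n$ via a close affine relation, and then to invoke Theorem \ref{THM_LILRHO}. Expanding the square in the numerator of \eqref{DW_STAT} and exploiting the identity $\sum_{k=1}^n \wh{\veps}_k \wh{\veps}_{k-1} = \wh{\rho}_n \sum_{k=1}^n \wh{\veps}_{k-1}^{\,2}$ that follows from \eqref{RHO_EST}, a short algebraic computation produces the decomposition
$$
\wh{D}_n = 2(1 - \wh{\rho}_n) + r_n, \qquad r_n := \frac{(2 \wh{\rho}_n - 1) \wh{\veps}_n^{\,2} - \wh{\veps}_0^{\,2}}{\sum_{k=0}^n \wh{\veps}_k^{\,2}}.
$$
Subtracting $D^{*} = 2(1 - \rho^{*})$ from both sides then yields
$$
\wh{D}_n - D^{*} = -2 \big(\wh{\rho}_n - \rho^{*} \big) + r_n,
$$
which is the main vehicle for transferring the results of Theorem \ref{THM_LILRHO} to $\wh{D}_n$.

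The next step is to check that $r_n$ is negligible on the scales entering the quadratic strong law and the law of iterated logarithm. The denominator $\sum_{k=0}^n \wh{\veps}_k^{\,2}$ grows linearly in $n$ almost surely, a fact that already underlies the proof of \eqref{ASCVGRHO}. For the boundary term $\wh{\veps}_n^{\,2}$, the finite fourth moment of $(V_n)$ combined with a Borel-Cantelli argument provides almost sure polynomial bounds on $\veps_n^{\,2}$ and $X_n^{\,2}$; together with the almost sure boundedness of $\wh{\theta}_n$ provided by \eqref{ASCVGTHETA}, this yields a polynomial bound on $\wh{\veps}_n^{\,2}$ and in turn $r_n = o(\sqrt{\log \log n / n})$ and $\sum_{k=1}^n r_k^{\,2} = o(\log n)$ almost surely.

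Once this control is granted, the conclusion is direct. Squaring the affine relation and summing from $k=1$ to $n$, the cross term $-4 \sum_{k=1}^n (\wh{\rho}_k - \rho^{*}) r_k$ is handled by Cauchy-Schwarz in combination with \eqref{QSLRHO} and is of smaller order than $\log n$; the quadratic strong law \eqref{QSLDW} with $\sigma_D^{\,2} = 4 \sigma_\rho^{\,2}$ then follows from \eqref{QSLRHO}. Similarly, the negligibility of $r_n$ on the LIL scale allows one to transfer \eqref{LILRHO} into \eqref{LILDW}, and \eqref{LILSUPDW} is an immediate consequence. The main technical obstacle is the sharp almost sure control of the boundary term $\wh{\veps}_n^{\,2}$; everything else is either routine algebra or a direct propagation from the corresponding statements for $\wh{\rho}_n$.
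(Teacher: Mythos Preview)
Your proposal is correct and follows essentially the same route as the paper's proof. The paper writes the decomposition as $\wh{D}_n - D^{*} = -2(1-f_n)(\wh{\rho}_n - \rho^{*}) + \zeta_n$ with $f_n=\wh{\veps}_n^{\,2}/J_n$ and $\zeta_n=2(\rho^{*}-1)f_n+\xi_n$, which is algebraically equivalent to your $\wh{D}_n - D^{*}=-2(\wh{\rho}_n-\rho^{*})+r_n$; both then control the remainder via $\wh{\veps}_n^{\,2}=o(\sqrt{n})$ a.s.\ (the paper's \eqref{SUPX} with $a=4$) and transfer \eqref{QSLRHO} and \eqref{LILRHO} directly.
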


We are now in the position to propose our new  bilateral statistical test built on the Durbin-Watson statistic $\wh{D}_n$. First of all, 
we shall not investigate the trivial case $\theta=0$ since our statistical test procedure is of interest only for autoregressive processes.
In addition, we shall note the existence of a critical case as introduced in Remark \ref{REM_GAMMA}. 
Indeed, if $\theta = -\rho$, the covariance matrix $\Gamma$ given by
\eqref{DEFGAMMA} is not invertible and the distribution of the statistic associated to the test we plan to establish will be degenerate. For this reason, we suggest a preliminary test for the hypothesis $``\theta = -\rho"$, allowing us to switch from one test to another if necessary. More precisely, we first wish to test

$$\cH_0\,:\,`` \theta = -\rho" \hspace{1cm} \text{against}\hspace{1cm}\cH_1\,:\,`` \theta \neq -\rho".
\vspace{2ex}
$$

Under the null hypothesis $\cH_0$, it is easy to see that $D^{*}=2$. According to Remark \ref{REM_GAMMA}, we have
\begin{equation}
\label{CRITCASE_CLT}
\frac{n(1-\theta^2)}{4\theta^4 (1+\theta^2)} \left( \wh{D}_{n} - 2 \right)^2 \liml \chi^2
\end{equation}
where $\chi^2$ has a Chi-square distribution with one degree of freedom. Moreover, the model can be rewritten under $\cH_0$, for all $n \geq 2$, as
\begin{equation}
\label{CRITCASE_MOD}
X_{n} = \theta^2 X_{n-2} + V_{n}.
\end{equation}
Then, we propose to make use of the standard least squares estimator $\wh{\vartheta}_n^{\,2}$ of $\theta^2$ 
\begin{equation}
\label{CRITCASE_EST}
\wh{\vartheta}_n^{\,2} = \frac{\sum_{k=2}^{n} X_{k-2}X_{k}}{\sum_{k=2}^{n} X_{k-2}^2}.
\end{equation}
Under $\cH_0$, we have the almost sure convergence of $\wh{\vartheta}_n^{\,2}$ to $\theta^2$. In addition, we obviously have
$D^{*} \neq 2$ under $\cH_1$. These results under the null and the alternative hypothesis lead to Theorem \ref{THM_CRITCASE}, 
whose proof immediately follows from \eqref{CRITCASE_CLT}.
\begin{thm}
\label{THM_CRITCASE}
Assume that $(V_{n})$ has a finite moment of order  $4$, $\theta \neq 0$ and $\rho \neq 0$.
Then, under the null hypothesis $\cH_0\,:\,`` \theta = -\rho"$,
\begin{equation}
\label{DWCRITCASEH0}
\frac{n(1-\wh{\vartheta}_n^{\,2})}{4(\wh{\vartheta}_n^{\,2})^2 (1+\wh{\vartheta}_n^{\,2})}  \left( \wh{D}_{n} - 2 \right)^2 \liml \chi^2
\end{equation}
where $\chi^2$ has a Chi-square distribution with one degree of freedom. In addition,
under the alternative hypothesis $\cH_1\,:\,`` \theta \neq -\rho"$,
\begin{equation}
\label{DWCRITCASEH1}
\lim_{n\rightarrow \infty} \frac{n(1-\wh{\vartheta}_n^{\,2})}{4(\wh{\vartheta}_n^{\,2})^2 (1+\wh{\vartheta}_n^{\,2})}  \left( \wh{D}_{n} - 2 \right)^2 = + \infty \hspace{1cm} \textnormal{a.s.}
\end{equation}
\end{thm}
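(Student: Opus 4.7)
My plan for this theorem has two natural parts, corresponding to the null and the alternative hypotheses.

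\textbf{Under $\cH_0$.} The substitution $\rho=-\theta$ in \eqref{AR} produces the stable recursion \eqref{CRITCASE_MOD}, a first-order autoregression in steps of two with coefficient $\theta^{2}\in(-1,1)$ driven by the i.i.d.\ noise $(V_n)$. I would first show that $\wh{\vartheta}_n^{\,2}\to\theta^2$ almost surely by the standard decomposition used in the proof of Theorem \ref{THM_ASCVGTHETA}: writing
$$
\sum_{k=2}^{n} X_{k-2}X_{k} = \theta^{2}\sum_{k=2}^{n} X_{k-2}^{2} + \sum_{k=2}^{n} X_{k-2}V_{k},
$$
applying the strong law for square-integrable martingales to the noise term, and using the ergodic convergence of $n^{-1}\sum X_{k-2}^{2}$ to the stationary variance $\sigma^{2}/(1-\theta^{4})$. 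Since $\theta\neq 0$ and $|\theta|<1$, the function $x\mapsto(1-x)/[4x^{2}(1+x)]$ is continuous at $x=\theta^{2}$, so by the continuous mapping theorem the prefactor of \eqref{DWCRITCASEH0} converges a.s.\ to $(1-\theta^{2})/[4\theta^{4}(1+\theta^{2})]$. Combining this with \eqref{CRITCASE_CLT} via Slutsky's lemma then yields \eqref{DWCRITCASEH0}.

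\textbf{Under $\cH_1$.} Two ingredients suffice. First, Theorem \ref{THM_ASCVGDW} gives $\wh{D}_n\to D^{*}=2(1-\rho^{*})$ a.s., and since $\rho^{*}=\theta\rho(\theta+\rho)/(1+\theta\rho)$ is nonzero under the assumptions $\theta\neq 0$, $\rho\neq 0$, $\theta\neq-\rho$, we have $(\wh{D}_{n}-2)^{2}\to(D^{*}-2)^{2}>0$ a.s. Second, eliminating $\veps_n$ from \eqref{AR} shows that $(X_n)$ satisfies the stationary $\mathrm{AR}(2)$ recursion $X_n=(\theta+\rho)X_{n-1}-\theta\rho X_{n-2}+V_n$, and its characteristic polynomial factors as $(z-\theta)(z-\rho)$, so stability holds under $|\theta|,|\rho|<1$. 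By the ergodic theorem, $\wh{\vartheta}_n^{\,2}$ converges a.s.\ to the lag-$2$ autocorrelation $r_{2}$ of the stationary solution, which the Yule-Walker equations give as
$$
r_{2}=\frac{\theta^{2}+\theta\rho+\rho^{2}-\theta^{2}\rho^{2}}{1+\theta\rho}.
$$
A short check of the sign of the numerator via $\theta^{2}(1-\rho^{2})+\rho(\theta+\rho)$ shows $r_{2}\in(0,1)$ under $\cH_1$ with $\theta\rho\neq 0$, so the prefactor in \eqref{DWCRITCASEH1} is asymptotic to $n$ times a strictly positive deterministic constant, and the product diverges to $+\infty$ almost surely.

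The main obstacle is the consistency $\wh{\vartheta}_n^{\,2}\to\theta^{2}$ a.s.\ under $\cH_0$: under \eqref{CRITCASE_MOD} the process decouples into two independent $\mathrm{AR}(1)$ chains indexed by the parity of $n$, and the sum in \eqref{CRITCASE_EST} mixes both chains, so one must invoke the ergodic theorem for the stationary joint process (or equivalently adapt the martingale argument of Appendix A) rather than quote a one-chain statement directly. Beyond this routine point, both \eqref{DWCRITCASEH0} and \eqref{DWCRITCASEH1} reduce to applications of the continuous mapping theorem and Slutsky's lemma to results already contained in the excerpt.
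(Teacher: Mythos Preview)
Your proposal is correct and follows essentially the same route as the paper: under $\cH_0$ you combine the almost sure convergence $\wh{\vartheta}_n^{\,2}\to\theta^2$ with \eqref{CRITCASE_CLT} via Slutsky, exactly as the paper indicates; under $\cH_1$ you use $\wh{D}_n\to D^{*}\neq 2$ together with the convergence of $\wh{\vartheta}_n^{\,2}$ to the lag-2 autocorrelation $\ell_2/\ell$ from Lemma \ref{CVGM2XY_LEMMA}. In fact you are more careful than the paper under $\cH_1$: the paper merely records $D^{*}\neq 2$, whereas you explicitly verify that the limit $r_2=\ell_2/\ell$ lies in $(0,1)$ so that the prefactor stays strictly positive --- a point the paper leaves implicit but which is needed for \eqref{DWCRITCASEH1}. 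Your sign check via the factorization $\theta^2(1-\rho^2)+\rho(\theta+\rho)$ is right, though to cover all sign configurations cleanly you may want to invoke the symmetric form $\rho^2(1-\theta^2)+\theta(\theta+\rho)$ as well; the upper bound $r_2<1$ follows at once from $1+\theta\rho-r_2(1+\theta\rho)=(1-\theta^2)(1-\rho^2)>0$.
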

For a significance level $\alpha$ where $0< \alpha <1$, the acceptance and rejection regions are given by
$\cA= [0, z_{\alpha}]$ and $\cR =] z_{\alpha}, +\infty [$
where $z_{\alpha}$ stands for the $(1-\alpha)$-quantile of the Chi-square distribution with one degree of freedom.
The null hypothesis $\cH_0$ will be accepted if the empirical value
$$
\frac{n(1-\wh{\vartheta}_n^{\,2})}{4(\wh{\vartheta}_n^{\,2})^2 (1+\wh{\vartheta}_n^{\,2})}  \left( \wh{D}_{n} - 2 \right)^2 \leq z_{\alpha},
$$
and rejected otherwise. Assume now that we accept $\cH_0$, 
which means that we admit \textit{de facto} the hypothesis $``\theta = -\rho"$.
For a given value $\rho_0$ such that $|\rho_0|< 1$, we wish to test whether or not the serial correlation parameter is equal to $\rho_0$, setting

$$\cH_0\,:\,`` \rho = \rho_0" \hspace{1cm} \text{against}\hspace{1cm}\cH_1\,:\,`` \rho \neq \rho_0".
\vspace{2ex} $$
One shall proceed once again to the test described by Theorem \ref{THM_CRITCASE}, taking $\rho_0^2$ in lieu of $\wh{\vartheta}_{n}^2$, insofar as one can easily agree that our test statistic satisfies the same properties, under $\cH_0$ as under $\cH_1$, by virtue of Remark \ref{REM_GAMMA}.
This alternative solution is necessary to avoid the degenerate situation implied by the critical case $\theta = -\rho$. Let us now focus on the more widespread case where the preliminary test leads to a rejection of $\cH_0$, admitting $``\theta \neq -\rho"$. 
For that purpose, denote $\widetilde{\theta}_n=\wh{\theta}_n + \wh{\rho}_n - \rho_0$ and 
$\widetilde{D}_n = 2 \big( 1 - \widetilde{\rho}_n \big)$ where
\begin{equation}
\label{DEFRHOT}
\widetilde{\rho}_n =\frac{\rho_0 \widetilde{\theta}_n (\widetilde{\theta}_n + \rho_0)}{1 + \rho_0 \widetilde{\theta}_n}.
\end{equation}

\noindent
One can observe that our statistical test procedure works whatever the value of the autoregressive parameter $\theta$
inside the interval $] \! -1,1[$ with $\theta \neq -\rho$. Moreover, it follows from \eqref{ASCVGTHETA} and \eqref{ASCVGRHO} that under the
null hypothesis $\cH_0$,
$$
\lim_{n\rightarrow \infty} \widetilde{\theta}_{n} = \theta + \rho_0 - \rho_0= \theta \hspace{1cm} \text{a.s.}
$$
To construct our statistical test, we need to introduce more notations. Denote
\begin{eqnarray*}
\wh{a}_n&=&-\rho_0 \big(\wh{\theta}_n + \widetilde{\theta}_n)=-\rho_0 \big(2\wh{\theta}_n + \wh{\rho}_n-\rho_0),\\
\wh{b}_n&=&1-\rho_0 \wh{\theta}_n, 
\end{eqnarray*}
and let $\wh{w}_n$ be the vector of $\dR^2$ given by $\wh{w}_n^{\,\prime}=(\wh{a}_{n}, \wh{b}_{n})$.
In addition, let
\begin{equation}
\label{DEFGAMMAN}
\wh{\Gamma}_n= \begin{pmatrix}
\wh{\alpha}_n & \  \rho_0 \widetilde{\theta}_n \wh{\alpha}_n \\
\rho_0 \widetilde{\theta}_n \wh{\alpha}_n & \wh{\beta}_n
\end{pmatrix}.
\end{equation}
where $\wh{\alpha}_n$ and $\wh{\beta}_n$ are defined as
\begin{eqnarray*}
\wh{\alpha}_n &=& \frac{(1-\widetilde{\theta}_n^{\, 2})(1-\rho_0 \widetilde{\theta}_n)(1-\rho_0^2)}{(1+\rho_0 \widetilde{\theta}_n)^3}, \\
 \wh{\beta}_n &=& \frac{(1- \rho_0\widetilde{\theta}_n)}{(1+ \rho_0\widetilde{\theta}_n)^3}
 \big((\widetilde{\theta}_n+ \rho_0)^2(1+\rho_0 \widetilde{\theta}_n)^2+( \rho_0 \widetilde{\theta}_n)^2 (1-\widetilde{\theta}_n^{\, 2})(1-\rho_0^2)\big).
\end{eqnarray*}
Furthermore, denote
\begin{equation}
\label{DEFTAUN}
\wh{\tau}_n^{\, 2}=\frac{4}{(1+\rho_0 \widetilde{\theta}_n)^2} \wh{w}_n^{\,\prime}\wh{\Gamma}_n  \wh{w}_n.
\end{equation}

\begin{thm}
\label{THM_DWTEST}
Assume that $(V_{n})$ has a finite moment of order  $4$, $\theta \neq \rho_0$ and $\theta\neq - \rho$.
Then, under the null hypothesis $\cH_0\,:\,`` \rho = \rho_0"$,
\begin{equation}
\label{DWTESTH0}
\frac{n}{\wh{\tau}_n^{\, 2}} \left( \wh{D}_{n} - \widetilde{D}_{n} \right)^2 \liml \chi^2
\end{equation}
where $\chi^2$ has a Chi-square distribution with one degree of freedom. In addition, 
under the alternative hypothesis $\cH_1\,:\,`` \rho \neq \rho_0"$,
\begin{equation}
\label{DWTESTH1}
\lim_{n\rightarrow \infty}  \frac{n}{\wh{\tau}_n^{\, 2}} \left( \wh{D}_{n} - \widetilde{D}_{n} \right)^2 = + \infty \hspace{1cm} \textnormal{a.s.}
\end{equation}
\end{thm}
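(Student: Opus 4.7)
My plan is to linearize $\sqrt{n}(\wh{D}_n - \widetilde{D}_n)$ under $\mathcal{H}_0$ in terms of $\sqrt{n}(\wh{\theta}_n - \theta^*)$ and $\sqrt{n}(\wh{\rho}_n - \rho^*)$, and then to apply the joint central limit theorem of Theorem \ref{THM_CLTRHO}. The starting point is the identity $\theta^* + \rho^* = \theta + \rho$, which follows at once from \eqref{thetastar} and \eqref{rhostar}. Under $\mathcal{H}_0$ this yields the crucial decomposition
$$\widetilde{\theta}_n - \theta = (\wh{\theta}_n - \theta^*) + (\wh{\rho}_n - \rho^*),$$
whence $\widetilde{\theta}_n \to \theta$, $\widetilde{\rho}_n \to \rho^*$ and $\widetilde{D}_n \to D^*$ almost surely. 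Moreover, the proof of Theorem \ref{THM_CLTDW} is based on the asymptotic affine relation $\sqrt{n}\bigl(\wh{D}_n - 2(1-\wh{\rho}_n)\bigr) \to 0$ in probability, which gives $\sqrt{n}(\wh{D}_n - D^*) = -2\sqrt{n}(\wh{\rho}_n - \rho^*) + o_\mathbb{P}(1)$.

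Applying the delta method to $\widetilde{\rho}_n = f(\widetilde{\theta}_n)$ with $f(x) = \rho_0 x(x+\rho_0)/(1+\rho_0 x)$ yields
$$\sqrt{n}(\widetilde{D}_n - D^*) = -2f'(\theta)\bigl[\sqrt{n}(\wh{\theta}_n - \theta^*) + \sqrt{n}(\wh{\rho}_n - \rho^*)\bigr] + o_\mathbb{P}(1),$$
where $f'(\theta) = \rho_0[\rho_0(1+\theta^2)+2\theta]/(1+\rho_0\theta)^2$. A short calculation shows that the almost sure limits $a_\infty = -\rho_0(\theta^*+\theta)$ and $b_\infty = 1-\rho_0\theta^* = (1-\rho_0^2)/(1+\rho_0\theta)$ of $\wh{a}_n$ and $\wh{b}_n$ satisfy $a_\infty = -f'(\theta)(1+\rho_0\theta)$ and $b_\infty = -(f'(\theta)-1)(1+\rho_0\theta)$, so subtracting the two linearizations gives
$$\sqrt{n}(\wh{D}_n - \widetilde{D}_n) = -\frac{2}{1+\rho_0\theta}\bigl[a_\infty\sqrt{n}(\wh{\theta}_n - \theta^*) + b_\infty\sqrt{n}(\wh{\rho}_n - \rho^*)\bigr] + o_\mathbb{P}(1).$$
By Theorem \ref{THM_CLTRHO} this converges in distribution to $\mathcal{N}(0, \tau^2)$ with $\tau^2 = \frac{4}{(1+\rho_0\theta)^2}\,w_\infty'\Gamma w_\infty$ and $w_\infty = (a_\infty, b_\infty)'$. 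Continuity of the plug-in maps together with $\widetilde{\theta}_n \to \theta$ and $\wh{\theta}_n \to \theta^*$ a.s. gives $\wh{\alpha}_n \to \sigma^2_\theta$, $\wh{\beta}_n \to \sigma^2_\rho$, $\wh{\Gamma}_n \to \Gamma$, $\wh{w}_n \to w_\infty$, and hence $\wh{\tau}_n^2 \to \tau^2$ a.s. Slutsky's lemma now delivers \eqref{DWTESTH0}.

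Under $\mathcal{H}_1$, the same identity $\theta^*+\rho^* = \theta+\rho$ gives $\widetilde{\theta}_n \to \theta+\rho-\rho_0 \neq \theta$ a.s., and hence $\widetilde{D}_n \to \widetilde{D} := 2(1-\widetilde{\rho})$ with $\widetilde{\rho} = \rho_0(\theta+\rho-\rho_0)(\theta+\rho)/(1+\rho_0(\theta+\rho-\rho_0))$. A direct factorization yields
$$\widetilde{\rho} - \rho^* = -\frac{(\theta+\rho)(\theta-\rho_0)(\rho-\rho_0)}{\bigl(1+\rho_0(\theta+\rho-\rho_0)\bigr)(1+\theta\rho)},$$
which is nonzero under the assumptions $\theta \neq \rho_0$, $\theta \neq -\rho$ and $\rho \neq \rho_0$. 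Therefore $\wh{D}_n - \widetilde{D}_n$ converges almost surely to a nonzero constant, while $\wh{\tau}_n^2$ still converges a.s. to a finite positive limit, so \eqref{DWTESTH1} follows. The main technical obstacle I expect is the clean algebraic matching of the delta-method coefficient $f'(\theta)$ with the plug-in quantities $\wh{a}_n/(1+\rho_0\widetilde{\theta}_n)$ and $\wh{b}_n/(1+\rho_0\widetilde{\theta}_n)$; the rest of the argument is continuous mapping, Slutsky, and the affine relation between $\wh{D}_n$ and $\wh{\rho}_n$ inherited from Theorem \ref{THM_CLTDW}.
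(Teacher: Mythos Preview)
Your proof is correct and follows essentially the same route as the paper: linearize $\sqrt{n}\,(\wh{D}_n-\widetilde{D}_n)$ as $-\tfrac{2}{1+\rho_0\theta}\,w_\infty^{\prime}W_n$ plus a negligible remainder, apply the joint CLT \eqref{CLTTHETARHO}, and conclude via Slutsky together with the almost sure convergence of $\wh{\tau}_n^{\,2}$. The only cosmetic difference is that the paper reaches this linearization through an exact algebraic identity for $\wh{\rho}_n-\widetilde{\rho}_n$ (with a quadratic remainder controlled by the LIL rates \eqref{ASRATETHETA} and \eqref{ASRATERHO}), whereas you obtain the same coefficients $a_\infty,b_\infty$ via the delta method applied to $f(\widetilde{\theta}_n)$; both yield the identical limiting variance and the same factorization of $\rho^{*}-\widetilde{\rho}$ under $\cH_1$.
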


One can observe by a symmetry argument on the role played by $\theta$ and $\rho$, that the assumption $\theta = \rho_0$
is not restrictive since the latter
can be seen as another way of expressing Theorem \ref{THM_DWTEST}.
From a practical point of view, for a significance level $\alpha$ where $0<\alpha <1$, the acceptance and rejection regions are given by 
$\cA= [0, z_{\alpha}]$ and $\cR =] z_{\alpha}, +\infty [$
where $z_{\alpha}$ stands for the $(1-\alpha)$-quantile of the Chi-square distribution with one degree of freedom.
The null hypothesis $\cH_0$ will be accepted if the empirical value
$$
\frac{n}{\wh{\tau}_n^{\, 2}} \left( \wh{D}_{n} - \widetilde{D}_{n} \right)^2 \leq z_{\alpha},
$$
and rejected otherwise. Moreover, if one wishes to test

$$\cH_0\,:\,`` \rho = 0" \hspace{1cm} \text{against}\hspace{1cm}\cH_1\,:\,`` \rho \neq 0",
\vspace{2ex}
$$ 
our statistical test procedure is very simple. As a matter of fact, we are in the particular case $\rho_0=0$ which means that
$\widetilde{D}_n = 2$, $\wh{a}_n=0$ and $\wh{b}_n=1$. We can also replace $\widetilde{\theta}_n$
by $\wh{\theta}_n$ so $\wh{\tau}_n^{\, 2}$ reduces to $\wh{\tau}_n^{\, 2}=4\wh{\theta}_n^{\, 2}$.

\begin{thm}
\label{THM_DWTESTBIS}
Assume that $(V_{n})$ has a finite moment of order  $4$, $\theta \neq 0$ and $\theta\neq - \rho$.
Then, under the null hypothesis $\cH_0\,:\,`` \rho = 0"$,
\begin{equation}
\label{DWTESTH0BIS}
\frac{n}{4\wh{\theta}_n^{\, 2}} \left( \wh{D}_{n} - 2 \right)^2 \liml \chi^2
\end{equation}
where $\chi^2$ has a Chi-square distribution with one degree of freedom. In addition, 
under the alternative hypothesis $\cH_1\,:\,`` \rho \neq 0"$,
\begin{equation}
\label{DWTESTH1BIS}
\lim_{n\rightarrow \infty}  \frac{n}{4\wh{\theta}_n^{\, 2}} \left( \wh{D}_{n} - 2 \right)^2 = + \infty \hspace{1cm} \textnormal{a.s.}
\end{equation}
\end{thm}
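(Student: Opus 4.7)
The plan is to derive Theorem \ref{THM_DWTESTBIS} as a direct specialization rather than re-running the machinery of Theorem \ref{THM_DWTEST}: I would combine the asymptotic normality of $\wh{D}_n$ (Theorem \ref{THM_CLTDW}) with the almost sure convergence of $\wh{\theta}_n$ (Theorem \ref{THM_ASCVGTHETA}) through Slutsky's lemma. This works because the special case $\rho_0=0$ collapses all the auxiliary quantities $\wh{a}_n,\wh{b}_n,\widetilde{\rho}_n,\wh{\Gamma}_n$ of Theorem \ref{THM_DWTEST} exactly to the elementary forms announced in the paragraph immediately preceding the statement, making a self-contained Slutsky-type argument the most economical route.

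Under $\cH_0: \rho = 0$, I would first read off from \eqref{rhostar} that $\rho^{*}=0$ and hence $D^{*}=2$. Substituting $\rho=0$ in \eqref{VARRHO}, or invoking the explicit limit recorded in Remark \ref{REM_GAMMA}, yields $\sigma_\rho^2=\theta^2$ and therefore $\sigma_D^2=4\theta^2$. Theorem \ref{THM_CLTDW} then gives
\[
\sqrt{n}\,\bigl(\wh{D}_n-2\bigr)\liml\cN(0,4\theta^2).
\]
Since Theorem \ref{THM_ASCVGTHETA} provides $\wh{\theta}_n\to\theta^{*}=\theta$ a.s. and the hypothesis $\theta\neq 0$ guarantees that $4\wh{\theta}_n^{\,2}$ is bounded away from zero a.s. for large $n$, a standard Slutsky step delivers $\sqrt{n}(\wh{D}_n-2)/(2|\wh{\theta}_n|)\liml\cN(0,1)$, whose square is precisely \eqref{DWTESTH0BIS}.

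Under $\cH_1: \rho \neq 0$, together with $\theta\neq 0$ and $\theta\neq -\rho$, I would argue directly from \eqref{thetastar} and \eqref{rhostar} that $\theta^{*}=(\theta+\rho)/(1+\theta\rho)\neq 0$ and $\rho^{*}=\theta\rho\,\theta^{*}\neq 0$, so that $D^{*}=2(1-\rho^{*})\neq 2$. Theorems \ref{THM_ASCVGTHETA} and \ref{THM_ASCVGDW} then yield $\wh{\theta}_n\to\theta^{*}$ and $\wh{D}_n\to D^{*}$ almost surely, so the ratio $(\wh{D}_n-2)^2/(4\wh{\theta}_n^{\,2})$ converges a.s.\ to the strictly positive constant $(D^{*}-2)^2/(4(\theta^{*})^2)$, and multiplication by $n$ produces the divergence \eqref{DWTESTH1BIS}.

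There is essentially no technical obstacle in this argument; the only point requiring care is the non-degeneracy bookkeeping in the alternative regime. Writing $\rho^{*}=\theta\rho(\theta+\rho)/(1+\theta\rho)$, the numerator vanishes exactly when one of $\theta$, $\rho$, $\theta+\rho$ is zero, and each of these possibilities is precisely ruled out by the hypotheses $\theta\neq 0$, the alternative $\rho\neq 0$, and $\theta\neq -\rho$ respectively; this is what forces $D^{*}\neq 2$ and makes the divergence strict.
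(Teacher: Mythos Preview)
Your argument is correct. The paper itself gives no separate proof of Theorem~\ref{THM_DWTESTBIS} in Appendix~C; it simply remarks, just before the statement, that when $\rho_0=0$ one has $\widetilde{D}_n=2$, $\wh{a}_n=0$, $\wh{b}_n=1$ and $\wh{\tau}_n^{\,2}=4\wh{\theta}_n^{\,2}$, so that Theorem~\ref{THM_DWTEST} collapses to the asserted form. Your route is a mild shortcut relative to this: instead of specializing the machinery of Theorem~\ref{THM_DWTEST} (which rests on the joint CLT \eqref{CLTTHETARHO}), you invoke the marginal CLT for $\wh{D}_n$ from Theorem~\ref{THM_CLTDW} together with $\wh{\theta}_n\to\theta$ and Slutsky. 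This is legitimate precisely because at $\rho_0=0$ the centering $\widetilde{D}_n$ becomes the constant $2$, so the joint law of $(\wh{\theta}_n,\wh{\rho}_n)$ is no longer needed. Your handling of the alternative, checking that $\rho^{*}=\theta\rho(\theta+\rho)/(1+\theta\rho)\neq 0$ exactly under the stated hypotheses, matches the paper's non-degeneracy bookkeeping in the proof of Theorem~\ref{THM_DWTEST}.
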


\begin{proof} The proofs are given in Appendix C. 
\end{proof}


\section{CONCLUSION}


Via an extensive use of the theory of martingales, we have provided a sharp analysis on the asymptotic behavior of the least squares estimators $\wh{\theta}_n$ and $\wh{\rho}_n$ which has allowed us to deduce the asymptotic behavior of the Durbin-Watson statistic $\wh{D}_n$ for the first-order autoregressive process.
More precisely, we have established the almost sure convergence and the asymptotic normality for all three estimators $\wh{\theta}_n$, $\wh{\rho}_n$ and
$\wh{D}_n$. In addition, we have proposed a new bilateral statistical procedure for testing serial correlation, built on $\wh{D}_{n}$. All these results give a new light on the well-known test of Durbin-Watson in a context of lagged dependent random variables. From a practical standpoint and for a matter of completeness, we may wonder about the estimation of the true values of the parameters $\theta$ and $\rho$.
As soon as $ \theta \neq -\rho$, we can estimate $\theta$ and $\rho$ by solving the nonlinear system of two equations
given, for $\wh{a}_n = \wh{\theta}_n +\wh{\rho}_n$ and $\wh{b}_n = \wh{\rho}_n / \wh{\theta}_n$, by

\begin{equation*}
\left\{
\begin{array}{lcl}
{\displaystyle \lim_{n \rightarrow \infty} \wh{a}_n }
& = &  \theta + \rho \vspace{1ex}\\
{\displaystyle \lim_{n \rightarrow \infty}  \wh{b}_n }
& = & \theta \rho
\end{array}\right.
\hspace{1cm} \text{a.s.}
\end{equation*}
One can easily find two couples of solutions, symmetrically linked together. For example, assuming $\theta < \rho$, we propose to make use of
\begin{equation*}
\widetilde{\theta}_n = \frac{\wh{a}_n - \sqrt{\wh{a}_n^{\, 2} - 4 \wh{b}_n}}{2} 
\hspace{1cm} \text{and}\hspace{1cm} 
\widetilde{\rho}_n = \frac{\wh{a}_n + \sqrt{\wh{a}_n^{\, 2} - 4 \wh{b}_n}}{2},
\end{equation*}
merely inverting the values of $\wt{\theta}_n$ and $\wt{\rho}_n$ whether, for some statistical argument, we would rather choose $\theta > \rho$. By the same token,
it is also possible to estimate the true variances $\sigma^2$, $\sigma^2_{\theta}$, $\sigma^2_{\rho}$ and $\sigma^2_{D}$. For example, 
via convergence \eqref{ASCVGSIG}, we propose to estimate $\sigma^2$ by
\begin{equation*}
\widetilde{\sigma}_n^{\, 2}=
\left(\frac{ (1 - \wh{b}_{n})(1+\wh{b}_{n})^3  }{ (1 + \wh{b}_{n})^2 - (\wh{a}_{n}\wh{b}_{n})^2 }\right) \wh{\sigma}_{n}^2.
\end{equation*}
\ \vspace{1ex} \par
This work lifts the veil on a set of questions for long left unanswered about the pioneer works of Durbin and Watson
\cite{DurbinWatson50}, \cite{DurbinWatson51}, \cite{DurbinWatson71}, \cite{Malinvaud61}, \cite{NerloveWallis66}. 
As an improvement, it would be useful to extend our analysis to $p$-order autoregressive processes.
In addition, it would be very interesting to investigate the asymptotic behavior of the
Durbin-Watson statistic $\wh{D}_{n}$ in the explosive case where $\vert \theta \vert > 1$ or $\vert \rho \vert > 1$.


\section*{Appendix A}

\begin{center}
{\small PROOFS OF THE AUTOREGRESSIVE PARAMETER RESULTS}
\end{center}

\renewcommand{\thesection}{\Alph{section}} 
\renewcommand{\theequation}
{\thesection.\arabic{equation}} \setcounter{section}{1}  
\setcounter{equation}{0}


\subsection*{}
\begin{center}
{\bf A.1. Proof of Theorem \ref{THM_ASCVGTHETA}.}
\end{center}


We start with some useful technical lemmas we shall make repeatedly use of. The proof of the first one may 
be found in \cite{Duflo97} page 24.

\begin{lem}
\label{NOISE_LEMMA}
Assume that $(V_{n})$ is a sequence of independent and identically distributed random variables
such that, for some $a \geq 1$, $\dE[|V_1|^a]$ is finite. Then,
\begin{equation}
\label{CVGNOISE}
\lim_{n\rightarrow \infty} \frac{1}{n} \sum_{k=1}^{n} |V_k|^a=\dE[|V_1|^a] \hspace{1cm}\textnormal{a.s.}
\end{equation}
and
\begin{equation}
\label{SUPNOISE}
\sup_{1 \leq k \leq n}  |V_k|=o(n^{1/a}) \hspace{1cm}\textnormal{a.s.}
\end{equation}
\end{lem}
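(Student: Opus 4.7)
The plan is to deduce both claims from standard i.i.d.\ tools, since the lemma makes no use of the autoregressive structure. The first convergence \eqref{CVGNOISE} is an immediate application of Kolmogorov's strong law of large numbers to the i.i.d.\ sequence $(|V_k|^a)_{k\geq 1}$, whose common expectation $\dE[|V_1|^a]$ is finite by hypothesis. There is nothing else to do for this part.

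For the supremum bound \eqref{SUPNOISE}, the plan is to combine the Borel--Cantelli lemma with the standard tail-sum identity
\begin{equation*}
\sum_{n\geq 1} \dP(|V_1|^a > n) \; \leq \; \dE[|V_1|^a] \; < \; \infty .
\end{equation*}
Fix $\veps>0$. Since the $V_n$ are identically distributed, a rescaling gives
\begin{equation*}
\sum_{n\geq 1} \dP\!\left(|V_n| > \veps\, n^{1/a}\right)
\; = \; \sum_{n\geq 1} \dP\!\left(|V_1|^a > \veps^{a}\, n\right)
\; \leq \; \frac{\dE[|V_1|^a]}{\veps^a} \; < \; \infty .
\end{equation*}
By Borel--Cantelli, only finitely many of the events $\{|V_n|>\veps\, n^{1/a}\}$ occur almost surely, so $\limsup_n |V_n|/n^{1/a} \leq \veps$ a.s. Letting $\veps$ run through a sequence decreasing to $0$ gives $|V_n|/n^{1/a} \to 0$ a.s.

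The last step is to upgrade this pointwise rate to a bound on the running maximum. Given $\omega$ in the full-measure event and $\veps>0$, pick $N(\omega)$ with $|V_k(\omega)| \leq \veps\, k^{1/a}$ for every $k>N$. Then, for $n \geq N$,
\begin{equation*}
\frac{1}{n^{1/a}}\sup_{1\leq k\leq n} |V_k(\omega)|
\; \leq \; \frac{1}{n^{1/a}} \max_{1\leq k\leq N} |V_k(\omega)| \; + \; \veps,
\end{equation*}
and the first term tends to $0$ as $n\to\infty$ since $N$ is fixed. Hence $\limsup_n n^{-1/a}\sup_{k\leq n}|V_k|\leq\veps$ a.s., and sending $\veps\to 0$ yields \eqref{SUPNOISE}. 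There is no real obstacle here; the only point requiring a moment of care is this last passage from the pointwise rate on $|V_n|$ to the rate on the partial maxima, which is why I would spell out the two-term split rather than simply invoke it.
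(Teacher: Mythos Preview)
Your proof is correct. The paper does not actually prove this lemma: it simply states that ``the proof of the first one may be found in \cite{Duflo97} page 24'' and moves on. What you have written is exactly the standard textbook argument one would find in such a reference---Kolmogorov's strong law for \eqref{CVGNOISE}, and the Borel--Cantelli route via the tail-sum bound $\sum_{n\geq 1}\dP(|V_1|^a>n)\leq\dE[|V_1|^a]$ for \eqref{SUPNOISE}, followed by the passage from the pointwise rate to the running maximum. So rather than taking a different route, you have supplied the self-contained details that the paper deliberately outsourced.
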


\begin{lem}
\label{X_LEMMA}
Assume that $(V_{n})$ is a sequence of independent and identically distributed random variables
such that, for some $a \geq 1$, $\dE[|V_1|^a]$ is finite. If $(X_n)$ satisfies \eqref{AR}
with $\vert \theta \vert < 1$, $\vert \rho \vert < 1$, then
\begin{equation}
\label{CVGX}
\sum_{k=1}^{n} |X_k|^a=O(n) \hspace{1cm}\textnormal{a.s.}
\end{equation}
and
\begin{equation}
\label{SUPX}
\sup_{1 \leq k \leq n}  |X_k|=o(n^{1/a}) \hspace{1cm}\textnormal{a.s.}
\end{equation}
\end{lem}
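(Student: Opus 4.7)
The plan is to exploit the stability of the joint AR(2) structure satisfied by $(X_n)$. Substituting $\veps_k = X_k - \theta X_{k-1}$ into the second line of \eqref{AR} shows that $X_n$ obeys the linear recursion $X_n = (\theta + \rho)X_{n-1} - \theta\rho X_{n-2} + V_n$, whose characteristic roots are exactly $\theta$ and $\rho$, both of modulus strictly less than $1$. Iterating \eqref{AR} (or solving this recursion via partial fractions) therefore yields an explicit representation
\begin{equation*}
X_n = \sum_{k=1}^{n} c_{n-k} V_k + r_n,
\end{equation*}
in which the coefficients $c_j = \sum_{i=0}^{j} \theta^{j-i}\rho^i$ satisfy $|c_j| \leq C \lambda^j$ for some $\lambda < 1$ (with a harmless polynomial factor in the degenerate case $\theta = \rho$), and the remainder $r_n$ coming from the initial conditions $X_0$, $\veps_0$ is bounded by $C\lambda^n$. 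In particular, $S := \sum_{j \geq 0}|c_j| < \infty$.

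The uniform bound \eqref{SUPX} then follows at once, since $|X_n| \leq S \sup_{1 \leq k \leq n}|V_k| + |r_n|$ and \eqref{SUPNOISE} of Lemma \ref{NOISE_LEMMA} supplies $\sup_{1 \leq k \leq n}|V_k| = o(n^{1/a})$ a.s. For the Cesàro-type bound \eqref{CVGX}, I would apply Jensen's inequality to the convex map $x \mapsto x^a$ with the probability weights $|c_{n-k}|/S$ to obtain
\begin{equation*}
|X_n|^a \leq 2^{a-1} S^{a-1} \sum_{k=1}^{n} |c_{n-k}|\, |V_k|^a + 2^{a-1}|r_n|^a.
\end{equation*}
Summing in $n$, swapping the order of summation, and exploiting $\sum_k |c_k| < \infty$ together with the geometric decay of $|r_n|^a$ reduces the task to controlling $\sum_{k=1}^{n}|V_k|^a$, which is $O(n)$ a.s.\ by \eqref{CVGNOISE}.

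The only mild subtlety is verifying the exponential decay of $(c_j)$ uniformly for $(\theta,\rho)$ with $|\theta|, |\rho| < 1$; this is a one-line computation via the closed form $c_j = (\theta^{j+1} - \rho^{j+1})/(\theta - \rho)$ when $\theta \neq \rho$, and $c_j = (j+1)\theta^j$ in the degenerate case. Neither formula enters the remainder of the argument: only the summability of $(c_j)$ is used.
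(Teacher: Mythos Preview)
Your proof is correct and rests on the same idea as the paper's: represent $X_n$ as a convolution of the innovations against geometrically decaying weights, apply a convexity inequality to push the exponent $a$ inside the sum, interchange the order of summation, and then invoke Lemma~\ref{NOISE_LEMMA}. The one difference in execution is that the paper never writes down the AR(2) impulse response $c_j$; instead it cascades the two AR(1) relations separately, first bounding $\sup|X_k|$ and $\sum|X_k|^a$ in terms of the corresponding quantities for $(\veps_k)$ via $|\theta|<1$, and then repeating the identical estimate to pass from $(\veps_k)$ to $(V_k)$ via $|\rho|<1$. The paper also phrases the convexity step as H\"older with conjugate exponent $b$ rather than Jensen with normalized weights, but these are of course the same inequality. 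The two-stage cascade buys the paper freedom from the $\theta=\rho$ versus $\theta\neq\rho$ case split you mention, at the price of running the same argument twice; your direct route is a little more compact but needs that explicit coefficient check.
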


\begin{rem}
In the particular case $a=2$, we obtain that
$$ \sum_{k=1}^{n} X_k^2=O(n) 
\hspace{1cm}\text{and}\hspace{1cm}
\sup_{1 \leq k \leq n} X_k^2=o(n)
\hspace{1cm}\textnormal{a.s.}$$
\end{rem}

\begin{proof} It follows from \eqref{AR} that for all $n\geq 1$,
\begin{equation}
\label{INEGX}
 | X_{n} | \leq |\theta|^{n} |X_0| + \sum_{k=1}^{n} |\theta |^{n-k} |\veps_{k}|.
 \end{equation}
Consequently, as $|\theta |< 1$, we obtain that
\begin{equation}
\label{MAJX}
\sup_{1 \leq k \leq n} |X_k| \leq \frac{1}{1-|\theta |} \Bigl( |X_0| + \sup_{1 \leq k \leq n} | \veps_{k} | \Bigr).
\end{equation}
By the same token, as $|\rho |< 1$, we also deduce from \eqref{AR} that
\begin{equation}
\label{MAJNOISE}
\sup_{1 \leq k \leq n} |\veps_k| \leq \frac{1}{1-|\rho |} \Bigl( |\veps_0| + \sup_{1 \leq k \leq n} | V_{k} | \Bigr).
\end{equation}
Hence, \eqref{SUPNOISE} together with \eqref{MAJX} and \eqref{MAJNOISE} obviously imply 
\begin{equation*}
\sup_{1 \leq k \leq n}  |X_k|=o(n^{1/a}) \hspace{1cm}\text{a.s.}
\end{equation*}
Furthermore, let $b$ be the conjugate exponent of $a$,
$$
\frac{1}{a}+\frac{1}{b}=1.
$$
It follows from \eqref{INEGX} and Holder's inequality that for all $n\geq 1$,
\begin{equation*}
 | X_{n} | \leq  \left( |\theta|^{n} |X_0|^a + \sum_{k=1}^{n} |\theta |^{n-k} |\veps_{k}|^a\right)^{1/a}\left(  \sum_{k=0}^{n} |\theta |^{n-k} \right)^{1/b}
 \end{equation*}
 which implies that
 \begin{eqnarray*}
 | X_{n} |^a &\leq & \left(  \sum_{k=0}^{n} |\theta |^{n-k} \right)^{a/b} \left( |\theta|^{n} |X_0|^a + \sum_{k=0}^{n} |\theta |^{n-k} |\veps_{k}|^a\right), \\
 &\leq & \left(  \sum_{k=0}^{\infty} |\theta |^{k} \right)^{a/b} \left( |\theta|^{n} |X_0|^a + \sum_{k=1}^{n} |\theta |^{n-k} |\veps_{k}|^a\right), \\
 &\leq & \Bigl(  1- | \theta | \Bigr)^{-a/b} \left( |\theta|^{n} |X_0|^a + \sum_{k=1}^{n} |\theta |^{n-k} |\veps_{k}|^a\right).
 \end{eqnarray*}
 Consequently,
\begin{eqnarray}
 \sum_{k=1}^{n} | X_{k} |^a & \leq & \Bigl(  1- | \theta | \Bigr)^{-a/b} \left(  \sum_{k=1}^{n}|\theta|^{k} |X_0|^a +  
 \sum_{k=1}^{n}\sum_{\ell=1}^{k} |\theta |^{k-\ell} |\veps_{\ell}|^a\right),  \notag \\
 &\leq & \Bigl(  1- | \theta | \Bigr)^{-a/b} \left(  |X_0|^a \sum_{k=1}^{n}|\theta|^{k}  +  
 \sum_{\ell=1}^{n}  |\veps_{\ell}|^a \sum_{k=\ell}^{n} |\theta |^{k-\ell}\right),  \notag \\ 
 &\leq &  \Bigl(  1- | \theta | \Bigr)^{-a} \left(  |X_0|^a   +  
 \sum_{k=1}^{n}  |\veps_{k}|^a \right).
\label{MAJSX} 
 \end{eqnarray} 
Via the same lines, we also obtain that
 \begin{equation}
 \sum_{k=1}^{n} | \veps_{k} |^a \leq 
 \Bigl(  1- | \rho | \Bigr)^{-a} \left(  |\veps_0|^a   +  
 \sum_{k=1}^{n}  |V_{k}|^a \right).
\label{MAJSNOISE} 
 \end{equation}
Finally, \eqref{CVGNOISE} together with \eqref{MAJSX} and \eqref{MAJSNOISE} lead to \eqref{CVGX}, which completes the proof of 
Lemma \ref{X_LEMMA}.
\end{proof}

\noindent{\bf Proof of Theorem \ref{THM_ASCVGTHETA}.}
We easily deduce from \eqref{AR} that the process $(X_n)$ satisfies the fundamental autoregressive equation given, for all $n\geq 2$, by
\begin{equation}
\label{NEWAR}
X_{n}=(\theta + \rho) X_{n-1} - \theta\rho X_{n-2} + V_{n}.
\end{equation}
For all $n\geq 0$, let
\begin{eqnarray} 
\label{DEFSN} 
S_{n} &=&\sum_{k=0}^{n}X_k^2, \\
\label{DEFPN} 
P_{n} &=&\sum_{k=1}^{n}X_{k} X_{k-1},\\
\label{DEFMN} 
M_n &=& \sum_{k=1}^{n} X_{k-1} V_{k}
\end{eqnarray} 
where $P_0=0$ and $M_0=0$. It is not hard to see from \eqref{NEWAR} that for all $n\geq 2$,
$$
P_{n} = (\theta + \rho) S_{n-1} - \theta\rho P_{n-1} + M_{n} + \rho X_0 (\veps_0 - X_0)
$$
which implies that
\begin{equation}
(1+\theta\rho) P_{n}= (\theta + \rho) S_{n-1} +M_{n} + \theta\rho X_nX_{n-1} + \rho X_0 (\veps_0 - X_0).
\label{DECOPN}
\end{equation}
Via \eqref{THETA_EST}, \eqref{DECOPN} leads to the main decomposition
\begin{equation}
\label{DECOTHETA_EST}
\wh{\theta}_{n} = \frac{\theta + \rho}{1+\theta\rho} + \frac{1}{1+\theta\rho} \frac{M_{n}}{S_{n-1}} + \frac{1}{1+\theta\rho} \frac{R_{n}}{S_{n-1}}
\end{equation}
where the remainder term
$$
R_n =  \theta \rho X_nX_{n-1} +  \rho X_0 (\veps_0 - X_0).
$$
For all $n \geq 1$, denote by $\cF_{n}$ the $\sigma $-algebra of the events occurring up to time $n$, $\cF_{n}=\sigma(X_0, \veps_0, V_1, \hdots, V_n)$.
We infer from \eqref{DEFMN} that $(M_n)$ is a locally square-integrable real martingale \cite{Duflo97}, \cite{HallHeyde80} with predictable quadratic variation
given by $\langle M \rangle_0=0$ and for all $n \geq 1$,
\begin{eqnarray*}
\langle M \rangle_n&=&\sum_{k=1}^n \dE[(M_k - M_{k-1})^2|\cF_{k-1}] ,\\
&=&\sum_{k=1}^n \dE[X_{k-1}^2 V_{k}^2|\cF_{k-1}] = \sigma^2 S_{n-1}.
\end{eqnarray*}
Furthermore, it follows from \eqref{NEWAR}  and Corollary 1.3.25 of \cite{Duflo97}
that $n=O(S_{n})$ a.s. Then, we deduce from the strong law of large
numbers for martingales given e.g. by Theorem 1.3.15 of \cite{Duflo97} that
$$
\lim_{n\rightarrow \infty} \frac{M_{n}}{\langle M \rangle_{n}} = 0 \hspace{1cm} \text{a.s.}
$$
which of course ensures that
\begin{equation}
\label{CVGMN}
\lim_{n\rightarrow \infty} \frac{M_{n}}{S_{n-1}} = 0 \hspace{1cm} \text{a.s.}
\end{equation}
It remains to show that the remainder $R_n=o(S_{n-1})$ a.s. We have from \eqref{SUPX} with $a=2$ that $|X_n|=o(\sqrt{n})$ a.s.
which implies that  $R_n=o(n)$ a.s. However, we already saw that $n=O(S_{n})$ a.s. Hence,
\begin{equation}
\label{CVGRN}
\lim_{n\rightarrow \infty} \frac{R_{n}}{S_{n-1}} = 0 \hspace{1cm} \text{a.s.}
\end{equation}
Finally, it follows from \eqref{DECOTHETA_EST} together with \eqref{CVGMN} and \eqref{CVGRN} that
$$
\lim_{n\rightarrow \infty} \wh{\theta}_{n}= \frac{\theta + \rho}{1 + \theta \rho}
\hspace{1cm} \text{a.s.}
$$
which achieves the proof of 
Theorem \ref{THM_ASCVGTHETA}.
\hfill
$\mathbin{\vbox{\hrule\hbox{\vrule height1ex \kern.5em\vrule height1ex}\hrule}}$


\subsection*{}
\begin{center}
{\bf A.2. Proof of Theorem \ref{THM_CLTTHETA}.}
\end{center}


In order to establish the asymptotic normality of the least squares estimator $\wh{\theta}_{n}$, it is necessary
to be more precise in Lemma \ref{X_LEMMA} with $a=2$.

\begin{lem}
\label{CVGM2X_LEMMA}
Assume that the initial values $X_0$ and $\veps_0$ are square-integrable and that $(V_{n})$ is a sequence of square-integrable, independent 
and identically distributed random variables with zero mean and variance $\sigma^2 > 0$. Then,
\begin{equation}
\label{CVGM2X}
\lim_{n\rightarrow \infty} \frac{1}{n} \sum_{k=1}^{n} X_k^2=\ell \hspace{1cm}\textnormal{a.s.}
\end{equation}
where the limiting value
\begin{equation}
\label{DEFl}
\ell=\frac{\sigma^2 (1+\theta \rho)}{(1-\theta^2)(1-\theta \rho)(1-\rho^2)}.
\end{equation}
In addition, if $\ell_1 =  \theta^{*}\ell$, then
\begin{equation}
\label{CVGMXX}
\lim_{n\rightarrow \infty} \frac{1}{n} \sum_{k=1}^{n} X_k X_{k-1}=\ell_1  \hspace{1cm}\textnormal{a.s.}
\end{equation}
\end{lem}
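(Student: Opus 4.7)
The plan is to first establish $\lim_{n \to \infty} S_n/n = \ell$ a.s., after which \eqref{CVGMXX} follows for free from Theorem \ref{THM_ASCVGTHETA}: since $\wh{\theta}_n = P_n/S_{n-1} \to \theta^{*}$ a.s., the identity $P_n/n = (P_n/S_{n-1})\cdot(S_{n-1}/n)$ yields $P_n/n \to \theta^{*} \ell = \ell_1$ a.s.

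To handle $S_n/n$, I would square the fundamental autoregressive relation \eqref{NEWAR} to obtain
\begin{equation*}
X_n^2 = (\theta+\rho)^2 X_{n-1}^2 + \theta^2\rho^2 X_{n-2}^2 + V_n^2 - 2(\theta+\rho)\theta\rho\, X_{n-1} X_{n-2} + 2(\theta+\rho)X_{n-1}V_n - 2\theta\rho\, X_{n-2} V_n.
\end{equation*}
Summing from $k=2$ to $n$ expresses $S_n$ as a linear combination of $S_{n-1}$, $S_{n-2}$, $P_{n-1}$, the martingales $M_n = \sum_{k=1}^n X_{k-1} V_k$ and $N_n = \sum_{k=2}^n X_{k-2} V_k$, the cumulative noise variance $\sum_{k=1}^n V_k^2$, and bounded boundary terms.

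Dividing this identity by $n$ and letting $n \to \infty$, I would use the following ingredients, each already at my disposal: (i) $\sum_{k=1}^n V_k^2/n \to \sigma^2$ a.s.\ by Lemma \ref{NOISE_LEMMA}; (ii) Lemma \ref{X_LEMMA} with $a=2$ gives both $S_n = O(n)$ and $X_n^2 = o(n)$ a.s., so $S_{n-1}/n$ and $S_{n-2}/n$ agree with $S_n/n$ up to $o(1)$; (iii) the proof of Theorem \ref{THM_ASCVGTHETA} showed $M_n/S_{n-1} \to 0$ a.s., and combined with (ii) this gives $M_n/n \to 0$ a.s.; similarly, $N_n$ is a real martingale with $\langle N \rangle_n = \sigma^2 S_{n-2} = O(n)$, so the martingale strong law of large numbers yields $N_n/n \to 0$ a.s.; (iv) the decomposition \eqref{DECOTHETA_EST} combined with (ii) and (iii) gives $P_n/n = \theta^{*} (S_{n-1}/n) + o(1)$ a.s. Substituting all of this into the limiting relation produces
\begin{equation*}
\frac{S_n}{n}\, \big[\,1 - (\theta+\rho)^2 - \theta^2\rho^2 + 2(\theta+\rho)\theta\rho\, \theta^{*}\,\big] = \sigma^2 + o(1) \hspace{1cm} \textnormal{a.s.}
\end{equation*}

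The remaining step, which I expect to be the only real source of work, is the algebraic verification that the bracketed coefficient $C$ simplifies to
\begin{equation*}
C = \frac{(1-\theta^2)(1-\rho^2)(1-\theta\rho)}{1+\theta\rho},
\end{equation*}
which is strictly positive under the hypothesis $|\theta|<1$, $|\rho|<1$. After clearing the denominator $1+\theta\rho$, the resulting numerator factors cleanly as $(1-\theta\rho)[(1+\theta\rho)^2 - (\theta+\rho)^2] = (1-\theta\rho)(1-\theta^2)(1-\rho^2)$, and one obtains $S_n/n \to \sigma^2/C = \ell$ a.s., completing the proof. No deeper probabilistic tool is needed beyond the martingale strong law already invoked for $M_n$ in the previous theorem.
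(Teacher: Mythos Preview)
Your proposal is correct and follows essentially the same route as the paper: square the fundamental relation \eqref{NEWAR}, sum, and use the martingale strong law for $M_n$ and $N_n$ together with $P_n = \theta^{*} S_n + o(S_n)$ to isolate $S_n$ against $\sum V_k^2$. The only cosmetic differences are that you phrase the negligible terms as $o(1)$ after dividing by $n$ (the paper writes them as $o(S_n)$ and divides afterwards), and you obtain \eqref{CVGMXX} by quoting Theorem~\ref{THM_ASCVGTHETA} rather than by reusing the decomposition \eqref{DECOPN} directly; both are equivalent and involve no circularity.
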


\begin{proof}
We deduce from the fundamental autoregressive equation \eqref{NEWAR} together with straightforward calculations that
for all $n\geq 2$,
\begin{eqnarray*}
S_{n} &=& (\theta + \rho)^2 S_{n-1} + (\theta \rho)^2 S_{n-2} + L_n
- 2 \theta \rho (\theta + \rho) P_{n-1} \\
\vspace{2ex}
& & \hspace{1cm}+ 2 (\theta + \rho) M_{n} - 2 \theta \rho N_{n} + \xi_1
\end{eqnarray*}
where $S_n$, $P_n$ and $M_n$ are respectively given by \eqref{DEFSN}, \eqref{DEFPN} and \eqref{DEFMN}, 
the last term 
$\xi_1=(1-2\theta \rho - \rho^2)X_0^2 + \rho^2 \veps_0^2 + 2\theta \rho X_0\veps_0 + 2\rho (\veps_0 - X_0) V_1$
and, for all $n \geq 2$,
\begin{eqnarray} 
\label{DEFLN} 
L_{n} &=&\sum_{k=1}^{n}V_k^2,  \\
\label{DEFNN} 
N_n &=& \sum_{k=2}^{n} X_{k-2} V_{k}.
\end{eqnarray} 
Consequently, 
\begin{equation}
(1-(\theta + \rho)^2- (\theta \rho)^2 ) S_{n}= L_n - 2 \theta \rho (\theta + \rho) P_{n} + 2 (\theta + \rho) M_{n} - 2 \theta \rho N_{n} - T_n
\label{DECOSN}
\end{equation}
where the remainder term 
$$T_n=((\theta + \rho)^2 + (\theta \rho)^2) X_{n}^2 + (\theta \rho)^2 X_{n-1}^2 - 2 \theta \rho (\theta + \rho) X_{n} X_{n-1} - \xi_1. $$
It follows from \eqref{CVGNOISE} with $a=2$ that
\begin{equation}
\label{LLNNOISE}
\lim_{n\rightarrow \infty} \frac{L_n}{n} =\sigma^2 \hspace{1cm}\text{a.s.}
\end{equation}
In addition, we already saw from equation \eqref{DECOPN} that $P_n= \theta^{*}S_{n-1}+o(S_{n-1})$ a.s.  which clearly implies 
\begin{equation}
P_{n}= \theta^{*}S_{n}+o(S_{n}) \hspace{1cm}\text{a.s.}
\label{RESPN}
\end{equation}
Moreover, $(N_n)$ given by \eqref{DEFNN} is a locally square-integrable real martingale sharing the same almost
sure properties than $(M_n)$. More precisely, its predictable quadratic variation is
given by $\langle N \rangle_n=\sigma^2 S_{n-2}$ which means that
\begin{equation}
\lim_{n\rightarrow \infty} \frac{M_{n}}{S_{n}} = 0
\hspace{1cm}\text{and} \hspace{1cm}
\lim_{n\rightarrow \infty} \frac{N_{n}}{S_{n}} = 0
\hspace{1cm}\text{a.s.}
\label{RESMNNN}
\end{equation}
Furthermore, we have from \eqref{SUPX} with $a=2$ that $X_n^2=o(n)$ a.s.
It ensures by use of $n=O(S_n)$ a.s. that
\begin{equation}
\lim_{n\rightarrow \infty} \frac{T_{n}}{S_{n}} = 0
\hspace{1cm}\text{a.s.}
\label{RESTN}
\end{equation}
Therefore, it follows from the conjunction of \eqref{DECOSN}, \eqref{RESPN}, \eqref{RESMNNN}, and \eqref{RESTN} that
\begin{equation}
(1-(\theta + \rho)^2- (\theta \rho)^2 - 2 \theta \rho (\theta + \rho) \theta^{*} ) S_{n}= L_n + o(S_n) \hspace{1cm}\text{a.s.}
\label{DECOFINSN}
\end{equation}
Finally, dividing both sides of \eqref{DECOFINSN} by $n$ and letting $n$ goes to infinity, we deduce from \eqref{LLNNOISE} that
\begin{equation*}
\lim_{n\rightarrow \infty} 
\frac{S_{n}}{n} =  \frac{\sigma^2 (1+\theta \rho)}{(1-\theta \rho)(1-\theta^2)(1-\rho^2)} \hspace{1cm} \text{a.s.}
\end{equation*}
\begin{equation*}
\lim_{n\rightarrow \infty} 
\frac{P_{n}}{n}  =  \frac{\sigma^2 (\theta + \rho)}{(1-\theta \rho)(1-\theta^2)(1-\rho^2)} \hspace{1cm} \text{a.s.}
\end{equation*}
These two limits will often be used in all the sequel.
\end{proof}
\ \vspace{-1ex}\\
\noindent{\bf Proof of Theorem \ref{THM_CLTTHETA}.}
We are now in the position to prove the asymptotic normality of $\wh{\theta}_{n}$. We have from the main decomposition
\eqref{DECOTHETA_EST} that for all $n \geq 2$, 
\begin{equation}
\label{A1_THETA_TLC}
\sqrt{n} \left( \wh{\theta}_{n} - \theta^{*} \right) = \sqrt{n}\left(\frac{\sigma^2}{1 + \theta\rho} \right)\frac{M_{n}}{\langle M \rangle_{n}} +
\sqrt{n}\left(\frac{1}{1 + \theta\rho} \right) \frac{R_{n}}{S_{n-1}}.
\end{equation}
We shall make use of the central limit theorem for martingales given e.g. by Corollary 2.1.10 of  \cite{Duflo97},
 to establish the asymptotic normality of the first term in 
the right-hand side of \eqref{A1_THETA_TLC}. On the other hand, we will also show that the second term
$\sqrt{n}R_{n}/S_{n-1}$ goes to zero almost surely. First of all, it follows from \eqref{CVGM2X} that
\begin{equation}
\label{A1_THETA_LIM_INC}
\lim_{n\rightarrow \infty} 
\frac{\langle M \rangle_{n}}{n} = \sigma^2 \ell  \hspace{1cm} \text{a.s.}
\end{equation}
From now on, in order to apply the central limit theorem for martingales, it is necessary to prove that the Lindeberg condition
is satisfied. For all $n \geq 1$, denote $\Delta M_{n} = X_{n-1} V_{n}$.  One only has to show that for all $\veps > 0$,
\begin{equation}
\label{A1_THETA_DEF_LINDEBERG}
\frac{1}{n} \sum_{k=1}^{n} \dE \left[ | \Delta M_{k} |^2 \rI_{| \Delta M_{k} |  \geq \veps \sqrt{n}} | \cF_{k-1} \right] \limp 0.
\end{equation}
However, we have assumed that $(V_n)$ has a finite moment of order 4, $\tau^4=\dE[V_1^4]$. Hence,
for all $n \geq 1$, $\dE [ | \Delta M_{n} |^4 | \cF_{n-1} ]=\dE [ X_{n-1}^4 V_n^4 | \cF_{n-1} ]= \tau^4 X_{n-1}^4$.
In addition, we deduce from \eqref{CVGX} with $a=4$ that
\begin{equation}
\label{CVGX4}
 \vspace{-2ex}
\sum_{k=1}^{n} X_k^4=O(n) \hspace{1cm}\text{a.s.}
\end{equation}
Therefore, for all $\veps > 0$,
\begin{eqnarray*}
\frac{1}{n} \sum_{k=1}^{n} \dE \left[ | \Delta M_{k} |^2 \rI_{| \Delta M_{k} |  \geq \veps \sqrt{n}} | \cF_{k-1} \right] 
& \leq & \frac{1}{\veps^2n^2} \sum_{k=1}^{n} \dE \left[ | \Delta M_{k} |^4  | \cF_{k-1} \right],   \vspace{-1ex} \\
& \leq & \frac{\tau^4}{\veps^2n^2} \sum_{k=1}^{n} X_{k-1}^4.
\end{eqnarray*}
Consequently, \eqref{CVGX4} ensures that
$$
\frac{1}{n} \sum_{k=1}^{n} \dE \left[ | \Delta M_{k} |^2 \rI_{| \Delta M_{k} |  \geq \veps \sqrt{n}} | \cF_{k-1} \right]= 
O ( n^{-1} )  \hspace{1cm}\text{a.s.}
$$
and the Lindeberg condition is clearly satisfied. We can conclude from the central limit theorem for martingales
that
\begin{equation}
\label{A1_THETA_CLTMN}
\frac{1}{\sqrt{n}} M_{n} \liml \cN \left(0, \sigma^2 \ell \right)
\end{equation}
in which the asymptotic variance is the deterministic limit 
given by (\ref{A1_THETA_LIM_INC}). Moreover, as $\ell >0$, we have from 
\eqref{A1_THETA_CLTMN} and Slutsky's lemma that
\begin{equation}
\label{A1_THETA_CLTMNORM}
\sqrt{n} \frac{M_{n}}{\langle M \rangle_{n}}  \liml \cN \left(0, \sigma^{-2} \ell^{-1} \right).
\end{equation}
It only remains to prove that $\sqrt{n} R_n=o(S_{n-1})$ a.s. We have from \eqref{SUPX} with $a=4$ that $|X_n|=o(n^{1/4})$ a.s.
which implies that  $\sqrt{n} R_n=o(n)$ a.s. Hence,
\begin{equation}
\label{CVGCLTRN}
\lim_{n\rightarrow \infty} \sqrt{n} \frac{R_{n}}{S_{n-1}} = 0 \hspace{1cm} \text{a.s.}
\end{equation}
Finally, it follows from \eqref{A1_THETA_TLC} together with \eqref{A1_THETA_CLTMNORM} and \eqref{CVGCLTRN} that
$$
\sqrt{n} \left( \wh{\theta}_{n} - \theta^{*} \right) \liml
\cN \left(0, \sigma^2_{\theta}\right)
$$
where the asymptotic variance
$$
 \sigma^2_{\theta}=\frac{\sigma^2}{\ell (1+\theta \rho)^2} =\frac{(1-\theta^2)(1-\theta\rho)(1-\rho^2)}{(1+\theta\rho)^3}
$$
which achieves the proof of Theorem \ref{THM_CLTTHETA}.
\hfill
$\mathbin{\vbox{\hrule\hbox{\vrule height1ex \kern.5em\vrule height1ex}\hrule}}$


\subsection*{}
\begin{center}
{\bf A.3. Proof of Theorem \ref{THM_LILTHETA}.}
\end{center}


Denote by $f_{n}$ the explosion coefficient associated with the locally square-integrable real martingale 
$(M_{n})$, given for all $n \geq 0$, by
\begin{equation}
\label{DEFFN}
f_{n} = \frac{X_{n}^2}{S_n}.
\end{equation}
It clearly follows from \eqref{CVGM2X} that $f_n$ tends to zero almost surely. Consequently, by virtue of the quadratic strong law for martingales given by Theorem 3 of \cite{Bercu04} or \cite{BercuCenacFayolle09},
\begin{equation}
\label{A1_THETA_LFQ}
\lim_{n\rightarrow \infty} 
\frac{1}{\log n} \sum_{k=1}^{n} f_{k} \left( \frac{M_{k}^2}{S_{k-1}} \right) = \sigma^2 \hspace{1cm} \text{a.s.}
\end{equation}
In addition, by summation of equation \eqref{DECOTHETA_EST}, we have for all $n \geq 1$, 
\begin{eqnarray*}
\sum_{k=1}^{n} f_{k} S_{k-1} \left( \wh{\theta}_{k} - \theta^{*} \right)^2 & = & \frac{1}{(1+\theta \rho)^2} \sum_{k=1}^{n} f_{k} \left( \frac{M_{k}^2}{S_{k-1}} \right)
+ \frac{1}{(1 + \theta \rho)^2} \sum_{k=1}^{n} f_{k} \left( \frac{R_{k}^2}{S_{k-1}} \right) \\
& & \hspace{1cm} +  \frac{2}{(1 + \theta \rho)^2} \sum_{k=1}^{n} f_{k} \left( \frac{M_{k}R_{k}}{S_{k-1}} \right).
\end{eqnarray*}
We already saw from \eqref{CVGCLTRN} that $R_n^2=o(S_{n-1})$ a.s. Moreover, by the elementary inequality $x \leq - \log(1 -x)$  where $0\leq x \leq 1$, we obtain
that $f_n \leq - \log(1-f_n)$ which means that $f_n \leq \log S_n - \log S_{n-1}$. Thus,
\begin{equation*}
\sum_{k=1}^{n} f_{k} \left( \frac{R_{k}^2}{S_{k-1}} \right)= O(1) + o \left( \sum_{k=1}^{n} f_{k} \right) = O(1) + o \left( \log S_n \right)= o \left( \log n \right)\hspace{1cm} \text{a.s.}
\end{equation*}
Consequently, the second term of the summation is negligible compared to the first one. Furthermore,  the third one is a cross-term and this ensures that it also plays a negligible role compared to the first term. Thereby,
\begin{equation}
\label{A1_THETA_LFQ2}
\lim_{n\rightarrow \infty} 
\frac{1}{\log n} \sum_{k=1}^{n} f_{k} S_{k-1} \left( \wh{\theta}_{k} - \theta^{*} \right)^2 = \frac{\sigma^2}{(1+ \theta \rho)^2} \hspace{1cm} \text{a.s.}
\end{equation}
Finally, as in the proof of Corollary 8 in \cite{Bercu04}, we deduce from \eqref{CVGM2X} and \eqref{A1_THETA_LFQ2} that
\begin{eqnarray*}
\lim_{n\rightarrow \infty} 
\frac{1}{\log n} \sum_{k=1}^{n}  \left( \wh{\theta}_{k} - \theta^{*} \right)^2 &=& \frac{\sigma^2}{\ell(1+ \theta \rho)^2} \hspace{1cm} \text{a.s.}\\
&=& \frac{(1 - \theta^2)(1-\theta \rho)(1 - \rho^2)}{(1 + \theta \rho)^3} \hspace{1cm} \text{a.s.}
\end{eqnarray*}
which completes the proof of the quadratic strong law \eqref{QSLTHETA}. We shall now proceed to the proof of the law of iterated logarithm
given by \eqref{LILTHETA}. Kolmogorov's law of iterated logarithm was extended to the martingale framework by Stout \cite{Stout70}, \cite{Stout74}, and 
a simplified version of this result may be found in Corollary 6.4.25 of \cite{Duflo97}. In order to apply 
the law of iterated logarithm for martingales, it is only necessary to verify that
\begin{equation}
\label{A1_THETA_LLI_COND}
\sum_{k=1}^{+\infty} \frac{X_{k}^4}{k^2} < +\infty \hspace{1cm} \text{a.s.}
\end{equation}
For all $n \geq 0$, denote 
$$T_{n} = \sum_{k=1}^{n} X_{k}^4$$
with $T_0=0$. We clearly have
\begin{equation*}
\sum_{k=1}^{+\infty} \frac{X_{k}^4}{k^2} =  \sum_{k=1}^{+\infty} \frac{T_{k} - T_{k-1}}{k^2}
=\sum_{k=1}^{+\infty} \left(\frac{2k+1}{k^2(k+1)^2} \right) T_{k}.
\end{equation*}
However, we already saw from \eqref{CVGX4} that $T_n = O(n)$ a.s. Consequently, \\
$$
\sum_{k=1}^{+\infty} \frac{X_{k}^4}{k^2}=O\left( \sum_{k=1}^{+\infty} \frac{T_k}{k^3} \right)= O\left( \sum_{k=1}^{+\infty} \frac{1}{k^2} \right) = O(1)\hspace{1cm} \text{a.s.}
$$
which immediately implies (\ref{A1_THETA_LLI_COND}). Then, we obtain  from the law of iterated logarithm for martingales that
\begin{eqnarray*}  
\limsup_{n \rightarrow \infty} \left(\frac{\langle M \rangle_{n}}{2  \log \log \langle M \rangle_{n}} \right)^{1/2} \!\! \frac{M_n}{\langle M \rangle_{n}}
&=& - \liminf_{n \rightarrow \infty}
\left(\frac{\langle M \rangle_{n}}{2\log \log \langle M \rangle_{n}} \right)^{1/2} \!\! \frac{M_n}{\langle M \rangle_{n}} \vspace{2ex}\\
&=&1
\hspace{1cm}\text{a.s.}
\end{eqnarray*}
Whence, as $ \langle M \rangle_{n}= \sigma^2 S_{n-1}$, we deduce from \eqref{CVGM2X} that
\begin{eqnarray}  
\limsup_{n \rightarrow \infty} \left(\frac{n}{2  \log \log n} \right)^{1/2} \!\! \frac{M_n}{S_{n-1}}
&=& - \liminf_{n \rightarrow \infty}
\left(\frac{n}{2\log \log n} \right)^{1/2} \!\! \frac{M_n}{S_{n-1}} \notag \vspace{1ex}\\
&=&\frac{\sigma}{\sqrt{\ell}}
\hspace{1cm}\text{a.s.}
\label{LILMN}
\end{eqnarray}
Furthermore, we obviously have from \eqref{CVGCLTRN} that
\begin{equation}
\label{LILRN}
\vspace{1ex}
\lim_{n\rightarrow \infty} \left(\frac{n}{2\log \log n} \right)^{1/2} \!\! \frac{R_{n}}{S_{n-1}} = 0 \hspace{1cm} \text{a.s.}
\vspace{1ex}
\end{equation}
Finally, \eqref{LILTHETA} follows from the conjunction of \eqref{DECOTHETA_EST}, \eqref{LILMN} and \eqref{LILRN}, completing the proof
of Theorem \ref{THM_LILTHETA}.
\hfill
$\mathbin{\vbox{\hrule\hbox{\vrule height1ex \kern.5em\vrule height1ex}\hrule}}$
\newpage

\section*{Appendix B}

\begin{center}
{\small  PROOFS OF THE SERIAL CORRELATION PARAMETER RESULTS}
\end{center}

\renewcommand{\thesection}{\Alph{section}} 
\renewcommand{\theequation}
{\thesection.\arabic{equation}} \setcounter{section}{2}  
\setcounter{equation}{0}
\setcounter{lem}{0}


\subsection*{}
\begin{center}
{\bf B.1. Proof of Theorem \ref{THM_ASCVGRHO}.}
\end{center}


In order to establish the almost sure convergence of the least squares estimator $\wh{\rho}_{n}$,
it is necessary to start with a useful technical lemma.

\begin{lem}
\label{CVGM2XY_LEMMA}
Assume that the initial values $X_0$ and $\veps_0$ are square-integrable and that $(V_{n})$ is a sequence of square-integrable, independent 
and identically distributed random variables with zero mean and variance $\sigma^2 > 0$. Then,
\begin{equation}
\label{CVGMXY}
\lim_{n\rightarrow \infty} \frac{1}{n} \sum_{k=2}^{n} X_k X_{k-2}=\ell_2  \hspace{1cm}\textnormal{a.s.}
\end{equation}
where the limiting value
\begin{equation}
\label{DEFl2}
\ell_2=\frac{\sigma^2 \big( (\theta + \rho)^2 - \theta\rho (1+\theta\rho) \big) }{(1-\theta^2)(1-\theta \rho)(1-\rho^2)}.
\end{equation}
\end{lem}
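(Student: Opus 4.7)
The plan is to follow the same strategy as in Lemma \ref{CVGM2X_LEMMA}: exploit the fundamental autoregressive equation \eqref{NEWAR}, namely $X_k=(\theta+\rho)X_{k-1}-\theta\rho X_{k-2}+V_k$, by multiplying both sides by $X_{k-2}$ and summing from $k=2$ to $n$. Setting $Q_n=\sum_{k=2}^n X_k X_{k-2}$ and recognizing the martingale $N_n=\sum_{k=2}^n X_{k-2} V_k$ already introduced in \eqref{DEFNN}, this immediately yields
\begin{equation*}
Q_n \,=\, (\theta+\rho)\sum_{k=2}^n X_{k-1}X_{k-2} \,-\, \theta\rho \sum_{k=2}^n X_{k-2}^2 \,+\, N_n.
\end{equation*}
A straightforward reindexing identifies the first deterministic sum with $P_{n-1}$ and the second with $S_{n-2}$, so that $Q_n=(\theta+\rho)P_{n-1}-\theta\rho\,S_{n-2}+N_n$.

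For the martingale term, the predictable quadratic variation is $\langle N\rangle_n=\sigma^2 S_{n-2}$, which is of order $n$ almost surely by Lemma \ref{CVGM2X_LEMMA}. Consequently, the strong law of large numbers for martingales gives $N_n/\langle N\rangle_n\to 0$ a.s., and therefore $N_n/n\to 0$ a.s. This is exactly the argument already used for $M_n$ in the proof of Theorem \ref{THM_ASCVGTHETA}, only applied to the twice-lagged version, so there is no novelty here.

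It remains to divide the decomposition by $n$ and pass to the limit. Lemma \ref{CVGM2X_LEMMA} provides the two key limits $S_n/n\to\ell$ and $P_n/n\to\ell_1=\theta^*\ell$ almost surely, so
\begin{equation*}
\lim_{n\to\infty}\frac{Q_n}{n} \,=\, (\theta+\rho)\,\theta^*\ell \,-\, \theta\rho\,\ell \,=\, \frac{(\theta+\rho)^2-\theta\rho(1+\theta\rho)}{1+\theta\rho}\,\ell \hspace{1cm}\textnormal{a.s.},
\end{equation*}
and substituting the explicit value of $\ell$ from \eqref{DEFl} cancels the factor $1+\theta\rho$ and produces exactly the claimed constant $\ell_2$. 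I do not anticipate any genuine obstacle: the reindexing only generates boundary contributions bounded by $X_0 X_1$ and by $X_{n-1}^2=o(n)$ a.s. from \eqref{SUPX} with $a=2$, all of which are negligible after division by $n$. The whole argument is essentially a shorter replay of the proof of Lemma \ref{CVGM2X_LEMMA}, bypassing the need for the quadratic decomposition \eqref{DECOSN} since here the recursion can be used linearly.
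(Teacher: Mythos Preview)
Your proposal is correct and matches the paper's own proof essentially line for line: the paper derives the same identity $Q_n=(\theta+\rho)P_{n-1}-\theta\rho\,S_{n-2}+N_n$ from \eqref{NEWAR}, invokes $N_n=o(n)$ a.s., and concludes via \eqref{CVGM2X} and \eqref{CVGMXX} that $Q_n/n\to(\theta+\rho)\ell_1-\theta\rho\,\ell=\ell_2$. A minor remark: the reindexing is in fact exact (no boundary terms arise), so your caveat about $X_0X_1$ and $X_{n-1}^2$ is unnecessary, though harmless.
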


\begin{proof} Proceeding as in the proof of Lemma \ref{CVGM2X_LEMMA}, we deduce from \eqref{NEWAR} that
for all $n\geq 2$,
\begin{equation}
\label{DEFQN}
Q_n=\sum_{k=2}^n X_k X_{k-2}=(\theta + \rho)P_{n-1} - \theta \rho S_{n-2} + N_n
\end{equation}
where $S_n$, $P_n$ and $N_n$ are respectively given by \eqref{DEFSN},  \eqref{DEFPN} and \eqref{DEFNN}.
We already saw in Appendix A that $N_n=o(n)$ a.s. Hence, it follows from \eqref{CVGM2X} and \eqref{CVGMXX} that
\begin{equation*}
\lim_{n\rightarrow \infty} \frac{Q_n}{n} = (\theta + \rho)\ell_1 - \theta \rho \ell = \ell_2 \hspace{1cm}\text{a.s.} \\
\end{equation*}
which achieves the proof of Lemma \ref{CVGM2XY_LEMMA}.
\end{proof}

\noindent{\bf Proof of Theorem \ref{THM_ASCVGRHO}.}
We are now in the position to prove the almost sure convergence of $\wh{\rho}_{n}$ to $\rho^{*}$ given by
\eqref{rhostar}. For all $n \geq 1$, denote
\begin{equation*} 
I_{n} =\sum_{k=1}^{n}\wh{\veps}_{k} \wh{\veps}_{k-1}
\hspace{1cm}\text{and}\hspace{1cm}
J_{n} =\sum_{k=0}^{n}\wh{\veps}_{k}^{\, \, 2}.
\end{equation*} 
It is not hard to see that
\begin{eqnarray}
\label{DECOIN}
I_{n} & = & P_n - \wh{\theta}_{n}(S_{n-1}+Q_n) + \wh{\theta}_{n}^{\, 2} P_{n-1}, \\
J_{n} & = & S_n - 2 \wh{\theta}_{n} P_n+\wh{\theta}_{n}^{\, 2} S_{n-1}.
\label{DECOJN}
\end{eqnarray} 
Consequently, it follows from convergence \eqref{ASCVGTHETA} together with \eqref{CVGM2X}, \eqref{CVGMXX} and \eqref{CVGMXY} that
\begin{eqnarray}
\lim_{n\rightarrow \infty} \frac{I_n}{n} & = & \ell_1 - \theta^{*} ( \ell + \ell_2)  + (\theta^{*})^2 \ell_1  
\hspace{1cm}\text{a.s.} \notag \\
& = & \theta \rho \theta^{*}  \ell (1 - (\theta^{*})^2 ) 
\hspace{1cm}\text{a.s.} \notag \\
& = &  \rho^{*}  \ell (1 - (\theta^{*})^2 ) 
\hspace{1cm}\text{a.s.}
\label{CVGIN}
\end{eqnarray}
since $\rho^{*}=\theta \rho \theta^{*}$. By the same token,
\begin{eqnarray}
\lim_{n\rightarrow \infty} \frac{J_n}{n} & = & \ell - 2 \theta^{*} \ell_1+ (\theta^{*})^2 \ell  
\hspace{1cm}\text{a.s.} \notag \\
& = &   \ell (1 - (\theta^{*})^2 ) 
\hspace{1cm}\text{a.s.}
\label{CVGJN}
\end{eqnarray}
One can observe that $\ell >0$ and $|\theta^{*}| <1$, which implies that $\ell (1 - (\theta^{*})^2 ) >0$.
Therefore, we deduce from \eqref{RHO_EST}, \eqref{CVGIN} and  \eqref{CVGJN} that
$$
\lim_{n\rightarrow \infty} \wh{\rho}_{n}  = \lim_{n\rightarrow \infty} \frac{I_n}{J_{n-1}} = \frac{\rho^{*}  \ell (1 - (\theta^{*})^2 )}{\ell (1 - (\theta^{*})^2 ) }
= \rho^{*}
\hspace{1cm}\text{a.s.}
$$
which completes the proof of  Theorem \ref{THM_ASCVGRHO}.
\hfill
$\mathbin{\vbox{\hrule\hbox{\vrule height1ex \kern.5em\vrule height1ex}\hrule}}$


\subsection*{}
\begin{center}
{\bf B.2. Proof of Theorem \ref{THM_CLTRHO}.}
\end{center}

First of all, we already saw from \eqref{DECOTHETA_EST} that
\begin{equation}
\label{DECOTHETACLT}
S_{n-1}\big(\wh{\theta}_{n} - \theta^{*} \big)= \frac{M_n}{1+\theta\rho}   + \frac{R_{n}(\theta)}{1+\theta\rho}
\end{equation}
where $R_n(\theta)=  \theta \rho X_nX_{n-1} +  \rho X_0 (\veps_0 - X_0)$.
Our goal is to find a similar decomposition for $\wh{\rho}_{n} - \rho^{*}$. On the one hand, we deduce from
\eqref{DECOPN} that
\begin{equation}
P_{n}= \theta^{*} S_{n} +\frac{M_{n}}{1+\theta\rho}  +  \frac{\xi_n^P}{1+\theta\rho} 
\label{DECOPNCLT}
\end{equation}
where $\xi_n^P=R_n(\theta)- (\theta + \rho)X_n^2$. On the other hand, we obtain from  \eqref{DEFQN} and \eqref{DECOPNCLT} that
\begin{equation}
Q_{n}= \big( (\theta + \rho)\theta^{*} - \theta \rho) \big)  S_{n} +\theta^{*} M_{n} + N_n +  \xi_n^Q
\label{DECOQNCLT}
\end{equation}
with $\xi_n^Q=\theta^{*} \xi_n^P-(\theta + \rho) X_nX_{n-1} +\theta\rho(X_n^2+X_{n-1}^2)$. 
Then, it follows from \eqref{DECOIN}, \eqref{DECOPNCLT} and \eqref{DECOQNCLT} together with tedious but straightforward calculations
that
\begin{equation}
I_{n}= \theta^{*}\big( \theta \rho -\theta^{*} \rho^{*} \big)  S_{n} +\left(\frac{1-\theta^{*} \rho^{*}}{1+\theta \rho}\right) M_{n} - \theta^{*}N_n 
-\big(\wh{\theta}_{n} - \theta^{*} \big)F_n + \xi_n^I
\label{DECOINCLT}
\end{equation}
where $F_n= S_n + Q_n - \big(\wh{\theta}_{n} + \theta^{*} \big)P_n$, 
$$
\xi_n^I=\wh{\theta}_{n} X_n^2-\wh{\theta}_{n}^{\, 2} X_nX_{n-1}+ \left(\frac{1+(\theta^{*})^2}{1+\theta \rho}\right)\xi_n^P - \theta^{*}\xi_n^Q.
$$
Via the same lines, we also find from \eqref{DECOJN}, \eqref{DECOPNCLT} and \eqref{DECOQNCLT} 
that
\begin{equation}
J_{n-1}= \big(1-(\theta^{*})^2 \big)  S_{n} -\left(\frac{2\theta^{*}}{1+\theta \rho}\right) M_{n} 
-\big(\wh{\theta}_{n} - \theta^{*} \big)G_n + \xi_n^J
\label{DECOJNCLT}
\end{equation}
where $G_n= 2P_n - \big(\wh{\theta}_{n} + \theta^{*} \big)S_n$, 
$$
\xi_n^J=-X_n^2 + 2\wh{\theta}_{n} X_nX_{n-1} -\wh{\theta}_{n}^{\, 2}( X_n^2+X_{n-1}^2) - \left(\frac{2\theta^{*}}{1+\theta \rho}\right)\xi_n^P.
$$
Replacing $I_{n}$ and $J_{n-1}$ by the expansions \eqref{DECOINCLT} and \eqref{DECOJNCLT}, we obtain from the
identity $J_{n-1}(\wh{\rho}_{n} - \rho^{*})=I_{n} - \rho^{*}J_{n-1}$ that
\begin{equation*}
J_{n-1}\big(\wh{\rho}_{n} - \rho^{*}\big)  = \left(\frac{1+\theta^{*} \rho^{*}}{1+\theta \rho}\right) M_{n} - \theta^{*}N_n -\big(\wh{\theta}_{n} - \theta^{*} \big)H_n + 
\xi_n^I - \rho^{*}\xi_n^J
\end{equation*}
where $H_n=F_n - \rho^{*}G_n$. One can observe that the leading term depending on $S_n$ vanishes as it should, since
$$
\left(\theta^{*}\big( \theta \rho -\theta^{*} \rho^{*}) \big)  - \rho^{*} \big(1-(\theta^{*})^2 \big) \right) =0.
$$
Consequently, we deduce from \eqref{DECOTHETACLT} that
\begin{equation}
\label{DECORHOCLT}
J_{n-1} \big(\wh{\rho}_{n} - \rho^{*} \big)= 
\frac{T_nM_n}{1+\theta\rho}  - \theta^{*}N_n +\frac{R_n(\rho)}{1+\theta\rho}
\end{equation}
where
\begin{eqnarray*}
T_n &=& 1+\theta^{*} \rho^{*} - \frac{H_n}{S_{n-1}},\\
R_n(\rho) &=& (1+\theta\rho)(\xi_n^I - \rho^{*}\xi_n^J)- \frac{R_n(\theta)H_n}{S_{n-1}}.
\end{eqnarray*}
In contrast to \eqref{DECOTHETACLT}, it was much more tricky to establish relation \eqref{DECORHOCLT}.
We are now in the position to prove the joint asymptotic normality of $\wh{\theta}_{n}$ and $\wh{\rho}_{n}$. 
Using the same approach as in \cite{WeiWinnicki90}, it follows from \eqref{DECOTHETACLT} and  \eqref{DECORHOCLT} that
\begin{equation}  
\label{DECOCLTTHETARHO}
\sqrt{n} \begin{pmatrix}
\ \wh{\theta}_{n} - \theta^{*}  \\
\ \wh{\rho}_{n} - \rho^{*}
\end{pmatrix}
 =\frac{1}{\sqrt{n}} A_n Z_n + B_n
\end{equation}
where
\begin{equation*}
Z_n= 
\begin{pmatrix}
M_n  \\
N_n
\end{pmatrix},
\vspace{2ex}
\end{equation*}
\begin{equation*}
A_n= \frac{n}{1+ \theta \rho}
\begin{pmatrix}
\displaystyle{\frac{1}{S_{n-1}}} & \ \ 0  \vspace{1ex}\\
\displaystyle{\frac{T_n}{J_{n-1}}} & -\displaystyle{\frac{(\theta + \rho)}{J_{n-1}}}
\end{pmatrix}
\hspace{1cm}\text{and}\hspace{1cm}
B_n= \frac{\sqrt{n}}{1+ \theta \rho}
\begin{pmatrix}
\displaystyle{
\frac{R_n(\theta)}{S_{n-1}}} \vspace{1ex}\\
\displaystyle{
\frac{R_n(\rho)}{J_{n-1}}
}
\end{pmatrix}.
\end{equation*}
On the one hand, we obtain from \eqref{CVGM2X}, \eqref{CVGMXX}, \eqref{CVGMXY} and \eqref{CVGJN} that
\begin{equation}
\label{CVGAN}
\lim_{n\rightarrow \infty} A_n= A
\hspace{1cm}\text{a.s.}
\end{equation}
where $A$ is the limiting matrix given by
\begin{equation}
\label{DEFA}
A=\frac{1}{\ell  (1+ \theta \rho)(1 - (\theta^{*})^2 )}
\begin{pmatrix}
1 - (\theta^{*})^2 & \ \ 0  \vspace{1ex}\\
\theta \rho + (\theta^{*})^2 & -(\theta + \rho)
\end{pmatrix}.
\end{equation}
On the other hand, as in the proof of \eqref{CVGCLTRN}, we clearly have
\begin{equation}
\label{CVGBN}
\lim_{n\rightarrow \infty} B_n= \begin{pmatrix}
\ 0\  \vspace{1ex}\\
\ 0\
\end{pmatrix}\hspace{1cm} \text{a.s.}
\end{equation}
Furthermore, $(Z_n)$ is a two-dimensional real martingale \cite{Duflo97}, \cite{HallHeyde80} 
with increasing process given, for all $n \geq 2$, by
\begin{equation*}
\langle Z \rangle_n=
\sigma^2 \begin{pmatrix}
S_{n-1} & P_{n-1} \\
P_{n-1} & S_{n-2}
\end{pmatrix}.
\end{equation*}
We deduce from \eqref{CVGM2X} and \eqref{CVGMXX} that
\begin{equation}
\label{CVGIPZN}
\lim_{n\rightarrow \infty} \frac{1}{n} \langle Z \rangle_n= L
\hspace{1cm}\text{a.s.}
\end{equation}
where $L$ is the positive-definite symmetric matrix given by
\begin{equation}
\label{DEFL}
L=\sigma^2 \ell
\begin{pmatrix}
1 & \theta^{*}  \vspace{1ex}\\
\theta^{*} & 1
\end{pmatrix}.
\end{equation}
We also immediately derive from \eqref{CVGX4} that $(Z_n)$ satisfies the Lindeberg condition.
Therefore, we can conclude from the central limit theorem for multidimensional martingales 
given e.g. by Corollary 2.1.10 of \cite{Duflo97} that
\begin{equation}
\label{CLTZN}
\frac{1}{\sqrt{n}} Z_{n} \liml \cN \left(0, L \right).
\end{equation}
Finally, we find from the conjunction of \eqref{DECOCLTTHETARHO}, \eqref{CVGAN}, \eqref{CVGBN}, and \eqref{CLTZN} together with
Slutsky's lemma that
\begin{equation*}  
\sqrt{n} \begin{pmatrix}
\ \wh{\theta}_{n} - \theta^{*}  \\
\ \wh{\rho}_{n} - \rho^{*}
\end{pmatrix}
 \liml  \cN \Bigl(0, A L A^{\prime}\Bigr).
\end{equation*}
One can easily check the identity $\Gamma= A L A^{\prime}$ via \eqref{DEFA} and \eqref{DEFL}, 
where $\Gamma$ is given by \eqref{DEFGAMMA}, which achieves 
the proof of  Theorem \ref{THM_CLTRHO}.
\hfill
$\mathbin{\vbox{\hrule\hbox{\vrule height1ex \kern.5em\vrule height1ex}\hrule}}$


\subsection*{}
\begin{center}
{\bf B.3. Proof of Theorem \ref{THM_LILRHO}.}
\end{center}


The proof of the quadratic strong law for $\wh{\theta}_{n}$ relies on the quadratic strong law for the martingale $(M_n)$ 
given by \eqref{A1_THETA_LFQ}
\begin{equation*}
\lim_{n\rightarrow \infty} 
\frac{1}{\log n} \sum_{k=1}^{n} f_{k} \left( \frac{M_{k}^2}{ S_{k-1}} \right) = \sigma^2 \hspace{1cm} \text{a.s.}
\end{equation*}
which implies that
\begin{equation}
\label{QSLMNFIN}
\lim_{n\rightarrow \infty} 
\frac{1}{\log n} \sum_{k=1}^{n}  \left( \frac{M_{k}}{S_{k-1}} \right)^2 = \frac{\sigma^2}{\ell} \hspace{1cm} \text{a.s.}
\end{equation}
In order to establish a similar result for $\wh{\rho}_{n}$, we shall introduce a suitable martingale $(L_n)$ which is
a linear combination of $(M_n)$ and $(N_n)$. The sequence $(L_n)$ is defined by $L_0=0$, $L_1=X_0V_1$ and, 
for all $n \geq 2$,
\begin{equation}
\label{DEFMLN}
L_n=M_n -aN_n = L_{1}+ \sum_{k=2}^n (X_{k-1} -a X_{k-2})V_k
\end{equation}
where 
$$
a=\frac{\theta + \rho}{\theta \rho + (\theta^{*})^2}.
$$
We infer from \eqref{DEFMN} and \eqref{DEFNN} together with \eqref{DEFMLN}
that $(L_n)$ is a locally square-integrable real martingale with predictable quadratic variation
given by $\langle L \rangle_0=0$,  $\langle L \rangle_1=\sigma^2 X_0^2$ and for all $n \geq 2$,
\begin{equation*}
\langle L \rangle_n = \sigma^2 \big( S_{n-1} -2a P_{n-1} + a^2 S_{n-2} \big).
\end{equation*}
Moreover, we clearly deduce from \eqref{CVGM2X} and \eqref{CVGMXX}  that
\begin{equation}
\label{LIMMLN}
\lim_{n\rightarrow \infty} 
\frac{\langle L \rangle_{n}}{n} = \sigma^2 \ell b  \hspace{1cm} \text{a.s.}
\end{equation}
where $b=1-2 a \theta^{*} + a^2$. It also comes from a tedious calculation that
\begin{equation}
\label{DEVb}
b=\frac{a^2(1-\theta^2)(1-\rho^2)c}{(\theta+\rho)^2(1+\theta \rho)^4}
\end{equation}
where $c=(\theta + \rho)^2(1+\theta \rho)^2 + (\theta \rho)^2(1-\theta^2)(1-\rho^2)$.
Then,  via the same arguments as in the proof of \eqref{QSLMNFIN}, we obtain
from \eqref{LIMMLN} that
\begin{equation}
\label{QSLMLNFIN}
\lim_{n\rightarrow \infty} 
\frac{1}{\log n} \sum_{k=1}^{n}  \left( \frac{L_{k}}{S_{k-1}}\right)^2 = \frac{\sigma^2 b}{ \ell } \hspace{1cm} \text{a.s.}
\end{equation}
Furthermore, it follows from \eqref{DECORHOCLT} that
\begin{equation}
\label{DECORHOQSL}
J_{n-1} \big(\wh{\rho}_{n} - \rho^{*} \big)= 
\left(\frac{\theta \rho +(\theta^{*})^2}{1+\theta\rho}\right)L_n  + \zeta_n=\frac{\theta^{*}L_n}{a}  + \zeta_n
\end{equation}
where
$$
\zeta_n=\left(\frac{T_n - \theta \rho -(\theta^{*})^2}{1+\theta\rho}\right)M_n +\frac{R_n(\rho)}{1+\theta\rho}.
$$
We obtain from \eqref{QSLMNFIN} and the almost sure convergence of $T_n$ to $\theta \rho +(\theta^{*})^2$ that
$$
\sum_{k=1}^{n}  \left( \frac{\zeta_{k}}{S_{k-1}}\right)^2 = o(\log n) \hspace{1cm} \text{a.s.}
$$
Consequently, \eqref{QSLMLNFIN} and \eqref{DECORHOQSL} lead to 
\begin{equation*}
\lim_{n\rightarrow \infty} 
\frac{1}{\log n} \sum_{k=1}^{n}  \left( \frac{J_{k-1}}{S_{k-1}}\right)^2 
\Big(\wh{\rho}_{k} - \rho^{*} \Big)^2= \frac{\sigma^2 b (\theta^{*})^2}{ a^2\ell } \hspace{1cm} \text{a.s.}
\end{equation*}
In addition, we get from \eqref{CVGJN} that
\begin{equation*}
\label{LIMJNSN}
\lim_{n\rightarrow \infty} 
\frac{J_n}{S_n} = 1- (\theta^{*})^{2} \hspace{1cm} \text{a.s.}
\end{equation*}
which implies that
\begin{equation}
\label{QSLRHOFIN}
\lim_{n\rightarrow \infty} 
\frac{1}{\log n} \sum_{k=1}^{n}  
\Big( \wh{\rho}_{k} - \rho^{*} \Big)^2= \frac{\sigma^2 b (\theta^{*})^2}{ a^2 \ell (1- (\theta^{*})^{2})^{2}} \hspace{1cm} \text{a.s.}
\end{equation}
However, we clearly have from \eqref{DEFl} that
$$
\frac{\sigma^2  (\theta^{*})^2}{ \ell (1- (\theta^{*})^{2})^{2}}=\frac{(\theta+\rho)^2(1-\theta \rho)(1+\theta\rho)}{(1-\theta^2)(1-\rho^2)}.
$$
Finally, we can deduce from \eqref{DEVb} and \eqref{QSLRHOFIN} that
\begin{equation*}
\lim_{n\rightarrow \infty} 
\frac{1}{\log n} \sum_{k=1}^{n}  
\Big( \wh{\rho}_{k} - \rho^{*} \Big)^2
=\frac{(1-\theta \rho)c}{(1+\theta \rho)^3}=\sigma^2_\rho \hspace{1cm} \text{a.s.}
\end{equation*}
which completes the proof of the quadratic strong law \eqref{QSLRHO}. The law of iterated logarithm given by \eqref{LILRHO}
is much more easy to handle. In order to make use of the law of iterated logarithm for the martingale $(L_n)$,
it is only necessary to verify that
\begin{equation*}
\sum_{k=1}^{+\infty} \frac{(X_{k}-aX_{k-1})^4}{k^2} < +\infty \hspace{1cm} \text{a.s.}
\end{equation*}
which of course follows from \eqref{A1_THETA_LLI_COND}. Consequently, we obtain that
\begin{eqnarray*}  
\limsup_{n \rightarrow \infty} \left(\frac{\langle L \rangle_{n}}{2  \log \log \langle L \rangle_{n}} \right)^{1/2} \!\! \frac{L_n}{\langle L \rangle_{n}}
&=& - \liminf_{n \rightarrow \infty}
\left(\frac{\langle L \rangle_{n}}{2\log \log \langle L \rangle_{n}} \right)^{1/2} \!\! \frac{L_n}{\langle L \rangle_{n}} \\
&=&1
\hspace{1cm}\text{a.s.}
\end{eqnarray*}
Therefore, we deduce from \eqref{LIMMLN} that
\begin{eqnarray*}  
\limsup_{n \rightarrow \infty} \left(\frac{n}{2  \log \log n} \right)^{1/2} \!\! \frac{L_n}{\langle L \rangle_{n}}
&=& - \liminf_{n \rightarrow \infty}
\left(\frac{n}{2\log \log n} \right)^{1/2} \!\! \frac{L_n}{\langle L \rangle_{n}} \notag \\
&=&\frac{1}{\sigma\sqrt{\ell b}}
\hspace{1cm}\text{a.s.}
\end{eqnarray*}
Whence, the convergence
\begin{equation*}
\lim_{n\rightarrow \infty} 
\frac{J_{n-1}}{\langle L \rangle_{n}} = \frac{1- (\theta^{*})^{2}}{\sigma^2 b} \hspace{1cm} \text{a.s.}
\end{equation*}
implies that
\begin{eqnarray}  
\limsup_{n \rightarrow \infty} \left(\frac{n}{2  \log \log n} \right)^{1/2} \!\! \frac{L_n}{J_{n-1}}
&=& - \liminf_{n \rightarrow \infty}
\left(\frac{n}{2\log \log n} \right)^{1/2} \!\! \frac{L_n}{J_{n-1}} \notag 
\vspace{-1ex}\\
&=&\frac{\sigma \sqrt{b}}{\sqrt{\ell}(1- (\theta^{*})^{2})}
\hspace{1cm}\text{a.s.}
\label{LILLN}
\end{eqnarray}
One can be convinced that the remainder term $\zeta_n$ at the right-hand side of \eqref{DECORHOQSL}
plays a negligible role compared to $L_n$. Finally, \eqref{DECORHOQSL} and \eqref{LILLN}
ensure that 
\begin{eqnarray*}  
\limsup_{n \rightarrow \infty} \left(\frac{n}{2  \log \log n} \right)^{1/2} \!\! \Big(\wh{\rho}_{n}-\rho^{*} \Big)
&=& - \liminf_{n \rightarrow \infty}
\left(\frac{n}{2\log \log n} \right)^{1/2} \!\!  \Big(\wh{\rho}_{n}-\rho^{*} \Big) \notag 
\vspace{-1ex}\\
&=&\frac{\sigma \sqrt{b} \theta^{*}}{a\sqrt{\ell}(1- (\theta^{*})^{2})}=\sigma_\rho
\hspace{1cm}\text{a.s.}
\end{eqnarray*}
which ends the proof of Theorem \ref{THM_LILRHO}.
\hfill
$\mathbin{\vbox{\hrule\hbox{\vrule height1ex \kern.5em\vrule height1ex}\hrule}}$


\section*{Appendix C}

\begin{center}
{\small PROOFS OF THE DURBIN-WATSON STATISTIC RESULTS}
\end{center}

\renewcommand{\thesection}{\Alph{section}} 
\renewcommand{\theequation}
{\thesection.\arabic{equation}} \setcounter{section}{3}  
\setcounter{equation}{0}


\subsection*{}
\begin{center}
{\bf C.1. Proof of Theorem \ref{THM_ASCVGDW}.}
\end{center}


First of all, we establish a very useful linear relation between the Durbin-Watson statistic $\wh{D}_n$
and the least squares estimator $\wh{\rho}_{n}$, which allows us to deduce the asymptotic behavior of $\wh{D}_n$.
For all $n \geq 1$, set
\begin{equation*} 
I_{n} =\sum_{k=1}^{n}\wh{\veps}_{k} \wh{\veps}_{k-1},
\hspace{1cm}
J_{n} =\sum_{k=0}^{n}\wh{\veps}_{k}^{\, \, 2},
\hspace{1cm}
K_{n} =\sum_{k=1}^{n}\big( \wh{\veps}_{k} - \wh{\veps}_{k-1}\big)^{2},
\end{equation*} 
\begin{equation*} 
f_{n} =\frac{\wh{\veps}_{n}^{\, \, 2}} {J_n}.
\end{equation*} 
It is not hard to see that 
\begin{equation*}
K_n=\sum_{k=1}^{n} \wh{\veps}_{k}^{\, \, 2} - 2 \sum_{k=1}^{n} \wh{\veps}_{k} \wh{\veps}_{k-1} + \sum_{k=1}^{n} \wh{\veps}_{k-1}^{\, \, 2}=
2\big(J_{n-1}-I_n\big)+ \wh{\veps}_{n}^{\, \, 2} -  \wh{\veps}_{0}^{\, \, 2}. 
\end{equation*}
Consequently, it follows from \eqref{DW_STAT} that 
\begin{equation}
\label{DEVDW}
\big(J_{n-1}+ \wh{\veps}_{n}^{\, \, 2}\big) \wh{D}_n= 2\big(J_{n-1}-I_n\big)+ \wh{\veps}_{n}^{\, \, 2} -  \wh{\veps}_{0}^{\, \, 2}.
\end{equation}
Therefore, dividing both sides of \eqref{DEVDW} by $J_{n-1}$, 
we obtain that
\begin{equation}
\label{DECODW}
\wh{D}_n= 2(1-f_n)\big(1-\wh{\rho}_n\big)+ \xi_n
\end{equation}
where 
$$
\xi_n =\frac{\wh{\veps}_{n}^{\, \, 2} - \wh{\veps}_{0}^{\, \, 2}}{J_{n}}.
$$
We already saw from \eqref{CVGJN} that
\begin{equation}
\label{NCVGJN}
\lim_{n\rightarrow \infty} \frac{J_n}{n} =  \ell (1 - (\theta^{*})^2 )
\hspace{1cm}\text{a.s.} 
\end{equation}
with $\ell (1 - (\theta^{*})^2 )>0$, which implies that both $f_n$ and $\xi_n$ converge to zero almost surely. Hence, we deduce from
\eqref{DECODW} that 
$$
\lim_{n\rightarrow \infty} \wh{D}_n =  D^{*}
\hspace{1cm}\text{a.s.} 
$$
where $D^{*}= 2( 1 - \rho^{*})$, which completes the proof of  Theorem \ref{THM_ASCVGDW}.
\hfill
$\mathbin{\vbox{\hrule\hbox{\vrule height1ex \kern.5em\vrule height1ex}\hrule}}$


\subsection*{}
\begin{center}
{\bf C.2. Proof of Theorem \ref{THM_CLTDW}.}
\end{center}


We shall now prove the asymptotic normality of $\wh{D}_{n}$ using \eqref{CLTRHO}. On the one hand, 
we clearly have from \eqref{DECODW} that 
\begin{equation}
\label{DECOCLTDW}
\sqrt{n} \big( \wh{D}_{n} - D^{*} \big) = -2 (1- f_n) \sqrt{n} \big( \wh{\rho}_{n} - \rho^{*} \big) + 2 (\rho^{*} - 1)\sqrt{n}f_n + \sqrt{n} \xi_n.
\end{equation}
On the other hand, we deduce from \eqref{SUPX} with $a=4$ that
\begin{equation*}
\sup_{1 \leq k \leq n}  \wh{\veps}_{k}^{\, \, 2}=o(\sqrt{n}) \hspace{1cm}\text{a.s.}
\end{equation*}
which, via \eqref{NCVGJN}, implies that
\begin{equation}
\label{CVGFN}
\lim_{n\rightarrow \infty} \sqrt{n}f_n =  0
\hspace{1cm}
\text{and}
\hspace{1cm}
\lim_{n\rightarrow \infty} \sqrt{n}\xi_n =  0
\hspace{1cm}\text{a.s.} 
\end{equation}
Then, it follows from  \eqref{CLTRHO}, \eqref{DECOCLTDW} and \eqref{CVGFN} together with
Slutsky's lemma that
\begin{equation*}
\sqrt{n} \left( \wh{D}_{n} - D^{*} \right) \liml \cN ( 0, \sigma^2_D)
\end{equation*}
where $\sigma^2_D=4 \sigma^2_\rho$, which achieves the proof of Theorem \ref{THM_CLTDW}.
\hfill
$\mathbin{\vbox{\hrule\hbox{\vrule height1ex \kern.5em\vrule height1ex}\hrule}}$
\subsection*{}
\begin{center}
{\bf C.3. Proof of Theorem \ref{THM_LILDW}.}
\end{center}

We immediately deduce from relation \eqref{DECODW} that
\begin{equation}
\label{DECOQSLDW1}
 \wh{D}_{n} - D^{*} = -2 (1- f_n) \big( \wh{\rho}_{n} - \rho^{*} \big) + \zeta_n 
\end{equation}
where $\zeta_n = 2 (\rho^{*} - 1)f_n + \xi_n$.
Consequently, by summation of \eqref{DECOQSLDW1}, we obtain that for all $n\geq 1$,
\begin{equation}
\label{DECOQSLDW2}
\sum_{k=1}^{n} \left( \wh{D}_{k} - D^{*} \right)^2 =  
\sum_{k=1}^{n} \Bigl(4(1- f_k)^2 \big( \wh{\rho}_{k} - \rho^{*} \big)^2 +\zeta_k^2 -4  (1- f_k) \zeta_k \big( \wh{\rho}_{k} - \rho^{*} \big)\Bigr). 
\end{equation}
Since $f_n$ goes to zero almost surely, we have
$$
\sum_{k=1}^{n} \zeta_k^2  =  O(1) +O\left( \sum_{k=1}^{n} f_k^2 \right)=O(1)+ o\left( \sum_{k=1}^{n} f_k \right)
= o (\log n) \hspace{1cm}\text{a.s.} 
$$
Hence, we infer from \eqref{QSLRHO} and \eqref{DECOQSLDW2} that
\begin{equation*}  
\lim_{n\rightarrow \infty} \frac{1}{\log n} \sum_{k=1}^{n} \left( \wh{D}_{k} - D^{*} \right)^2= 4\sigma^2_\rho= \sigma^2_D
\hspace{1cm} \textnormal{a.s.}
\end{equation*}
Furthermore, the law of iterated logarithm \eqref{LILDW} immediately follows from \eqref{LILRHO} and \eqref{CVGFN}, which
completes the proof of Theorem \ref{THM_LILDW}.
\hfill
$\mathbin{\vbox{\hrule\hbox{\vrule height1ex \kern.5em\vrule height1ex}\hrule}}$

\subsection*{}
\begin{center}
{\bf C.4. Proof of Theorem \ref{THM_DWTEST}.}
\end{center}

We shall now establish the asymptotic behavior associated to our Durbin-Watson statistical test. It follows from
the identity $\widetilde{\theta}_n=\wh{\theta}_n + \wh{\rho}_n - \rho_0$ and
\eqref{DEFRHOT} that \\
\begin{eqnarray*}
\wh{\rho}_n - \widetilde{\rho}_n &=& \wh{\rho}_n - \frac{\rho_0 \widetilde{\theta}_n (\widetilde{\theta}_n + \rho_0)}{1 + \rho_0 \widetilde{\theta}_n}, \\
&=& \frac{\wh{\rho}_n + \rho_0 \widetilde{\theta}_n\wh{\rho}_n-\rho_0 \widetilde{\theta}_n (\widetilde{\theta}_n + \rho_0)}{1 + \rho_0 \widetilde{\theta}_n}, \\
&=& \frac{\wh{\rho}_n + \rho_0 \widetilde{\theta}_n(\wh{\rho}_n-\widetilde{\theta}_n - \rho_0)}{1 + \rho_0 \widetilde{\theta}_n}, \\
&=& \frac{\wh{\rho}_n - \rho_0 \widetilde{\theta}_n\wh{\theta}_n}{1 + \rho_0 \widetilde{\theta}_n}.
\end{eqnarray*}
Hence, if $\wh{\gamma}_n = \wh{\rho}_n - \theta \rho_0 \wh{\theta}_n$ and $\wh{\delta}_n = \wh{\theta}_n + \wh{\rho}_n - \theta - \rho_0$, 
we find that
\begin{equation}
\label{DELTARHON}
\wh{\rho}_n - \widetilde{\rho}_n = \frac{\wh{\rho}_n - \rho_0 \wh{\theta}_n(\wh{\theta}_n + \wh{\rho}_n - \rho_0)}{1 + \rho_0 \widetilde{\theta}_n} = \frac{\wh{\gamma}_n - \rho_0 \wh{\theta}_n \wh{\delta}_n }{1 + \rho_0 \widetilde{\theta}_n}.
\end{equation}
Denote
\begin{equation*}
\theta^{*}_0= \frac{\theta + \rho_0}{1 + \theta\rho_0}
\hspace{1cm}\text{and}\hspace{1cm}
\rho^{*}_0= \frac{\theta \rho_0(\theta + \rho_0)}{1 + \theta\rho_0}.
\end{equation*}
Since $\rho^{*}_0=\theta \rho_0 \theta^{*}_0$ and $\theta^{*}_0+\rho^{*}_0=\theta + \rho_0$, we obtain that
$\wh{\gamma}_n = \wh{\rho}_n - \rho^{*}_0- \theta \rho_0 (\wh{\theta}_n- \theta^{*}_0)$ and 
$\wh{\delta}_n = \wh{\theta}_n + \wh{\rho}_n - \theta^{*}_0-\rho^{*}_0$.
Consequently, we deduce from \eqref{DELTARHON} that
\begin{equation}
\label{DELTARHONN}
\wh{\rho}_n - \widetilde{\rho}_n = 
\frac{a(\wh{\theta}_n- \theta^{*}_0)+b(\wh{\rho}_n - \rho^{*}_0)}{1 + \rho_0 \widetilde{\theta}_n} 
-\frac{\rho_0 \wh{\delta}_n (\wh{\theta}_n- \theta^{*}_0) }{1 + \rho_0 \widetilde{\theta}_n}
\end{equation}
where $a=-\rho_0( \theta + \theta^{*}_0)$ and $b= 1 - \rho_0 \theta^{*}_0$. On the other hand, it follows
from \eqref{DECODW} that
\begin{eqnarray}
\wh{D}_n - \widetilde{D}_n &=& 2(1-f_n)\big(1-\wh{\rho}_n\big)+ \xi_n -2\big(1 -\widetilde{\rho}_n\big),  \notag\\
&=& -2\big(\wh{\rho}_n -\widetilde{\rho}_n\big) -2 f_n \big( 1 - \wh{\rho}_n \big) + \xi_n, \notag\\
&=& -2\big(\wh{\rho}_n -\widetilde{\rho}_n\big)  + \Delta_n
\label{DELTADW}
\end{eqnarray}
where $\Delta_n= \xi_n -2 f_n \big( 1 - \wh{\rho}_n \big)$. Therefore, \eqref{DELTARHONN} together with \eqref{DELTADW} imply that
\begin{equation}  
\label{DELTACLTDW}
\sqrt{n} \big( \wh{D}_n - \widetilde{D}_n \big)= - \frac{2w^{\prime} W_n}{1 + \rho_0 \widetilde{\theta}_n}
+ \frac{2 \rho_0 \wh{\delta}_n v^{\prime} W_n }{1 + \rho_0 \widetilde{\theta}_n} 
 + \sqrt{n}\Delta_n
\end{equation}
where $v$ and $w$ are the vectors of $\dR^2$ given by $v^{\,\prime}=(1, 0)$, $w^{\,\prime}=(a, b)$ and
\begin{equation*}
W_n= \sqrt{n}\begin{pmatrix}
\wh{\theta}_{n} - \theta^{*}_0  \\
\wh{\rho}_{n} - \rho^{*}_0
\end{pmatrix}.
\end{equation*}
We already saw by \eqref{CVGFN} that
\begin{equation}
\label{CVGDELTAN}
\lim_{n\rightarrow \infty} \sqrt{n}\Delta_n =  0
\hspace{1cm}\text{a.s.} 
\end{equation}
Moreover, as $|\wh{\delta}_n | \leq |\wh{\theta}_n  - \theta^{*}_0| + | \wh{\rho}_n-\rho^{*}_0|$, the almost sure rates 
of convergence given by \eqref{ASRATETHETA} and \eqref{ASRATERHO} ensure that under the null hypothesis $\cH_0$,
\begin{equation*}
\Bigl| \wh{\delta}_n v^{\prime} W_n \Bigr| = O \left( \frac{\log \log n}{\sqrt{n}}\right) \hspace{1cm}\text{a.s.}
\end{equation*}
leading to
\begin{equation}
\label{CVGDELTANBIS}
\lim_{n\rightarrow \infty} \wh{\delta}_n v^{\prime} W_n =  0
\hspace{1cm}\text{a.s.} 
\end{equation}
Consequently, it follows from the joint asymptotic normality \eqref{CLTTHETARHO} together 
with Slutsky's lemma, \eqref{DELTACLTDW}, \eqref{CVGDELTAN} and \eqref{CVGDELTANBIS} that under the null hypothesis $\cH_0$,
\begin{equation}
\label{DELTACLTDWFIN}
\sqrt{n} \left( \wh{D}_{n} - \widetilde{D}_n \right) \liml \cN ( 0, \tau^2)
\end{equation}
with
$$\tau^2=\frac{4}{(1+ \rho_0 \theta)^2} w^{\prime} \Gamma w$$ 
where the covariance matrix $\Gamma$ is given by \eqref{DEFGAMMA}.
We recall from Remark \ref{REM_GAMMA} that $\Gamma$ is invertible as soon as $\theta \neq - \rho$, which implies that $\tau^2 >0$.
In addition, we obtain from \eqref{DEFGAMMAN} and \eqref{DEFTAUN} that
\begin{equation}
\label{ASCVGTAUN}
\lim_{n\rightarrow \infty} \wh{\tau}_n^{\, 2}= \tau^2
\hspace{1cm}\text{a.s.} 
\end{equation}
Finally, we deduce from \eqref{DELTACLTDWFIN}, \eqref{ASCVGTAUN} and once again from Slutsky's lemma that
under the null hypothesis $\cH_0$,
\begin{equation*}
\frac{\sqrt{n}}{\wh{\tau}_n} \left( \wh{D}_{n} - \widetilde{D}_n \right) \liml \cN ( 0, 1)
\end{equation*}
which obviously implies \eqref{DWTESTH0}. It remains to show that under the alternative hypothesis $\cH_1$,
our test statistic goes almost surely to infinity. We already saw from \eqref{ASCVGRHO} that
\begin{equation}  
\label{ASCVGRHOFIN}
\lim_{n\rightarrow \infty} \wh{\rho}_{n} = \frac{\theta \rho( \theta + \rho)}{1+ \theta \rho} \hspace{1cm} \text{a.s.}
\end{equation}
Moreover, as $\widetilde{\theta}_n$ converges almost surely to $\theta + \rho - \rho_0$, we obtain that
\begin{equation}  
\label{ASCVGRHOTILDEFIN}
\lim_{n\rightarrow \infty} \widetilde{\rho}_{n} = \frac{\rho_0(\theta + \rho)( \theta + \rho - \rho_0)}{1+ \rho_0(\theta +\rho - \rho_0)} \hspace{1cm} \text{a.s.}
\end{equation}
Hence, it follows from \eqref{ASCVGRHOFIN} and \eqref{ASCVGRHOTILDEFIN} that
\begin{equation}  
\label{ASCVGDWH1FIN}
\lim_{n\rightarrow \infty} \big( \wh{\rho}_{n} -\widetilde{\rho}_{n} \big)=
 \frac{(\theta + \rho)( \theta - \rho_0)( \rho - \rho_0)}{(1+ \theta \rho)(1+\rho_0(\theta +\rho - \rho_0))} \hspace{1cm} \text{a.s.}
\end{equation}
Under the alternative hypothesis, this limit is equal to zero if and only if  $\theta = \rho_0$ or $\theta= - \rho$. 
However, these particular cases are already excluded from the study of $\cH_1$.
Consequently, under $\cH_1$, we deduce from \eqref{ASCVGDWH1FIN} that
\begin{equation*}  
\lim_{n\rightarrow \infty} n \big( \wh{\rho}_{n} -\widetilde{\rho}_{n} \big)^2= + \infty \hspace{1cm} \text{a.s.}
\end{equation*}
which, via \eqref{DELTADW}, clearly leads to \eqref{DWTESTH1}, completing the proof of Theorem \ref{THM_DWTEST}.
\hfill
$\mathbin{\vbox{\hrule\hbox{\vrule height1ex \kern.5em\vrule height1ex}\hrule}}$


\nocite{*}

\bibliographystyle{acm}
\bibliography{BPDW2011}

\end{document}